\newtheorem{theorem}{Theorem}[section]
\newtheorem{corollary}[theorem]{Corollary}
\newtheorem{proposition}[theorem]{Proposition}
\theoremstyle{definition}
\newtheorem{definition}[theorem]{Definition}
\newtheorem{example}[theorem]{Example}
\theoremstyle{remark}
\newtheorem{remark}[theorem]{Remark}
\newcommand{\F}{\Bbbk}
\newcommand{\G}{\mathcal{G}}
\newcommand{\vet}{\varepsilon_t}
\newcommand{\qd}{\nolinebreak\hfill$\Box$\par}
\newcommand{\Hom}{\operatorname{Hom}}
\newcommand{\kernel}{\operatorname{Ker}}
\newcommand{\imagem}{\operatorname{Im}}
\begin{document}

\title[Inner Actions of Weak Hopf algebras]{Inner Actions of Weak Hopf algebras}

\author[D. Bagio]{Dirceu Bagio}
\address{Departamento de Matem\'atica, Universidade Federal de Santa Maria, Brazil}
\email{bagio@smail.ufsm.br}
\author[D. Fl\^ores]{Daiana Fl\^ores}
\address{Departamento de Matem\'atica, Universidade Federal de Santa Maria, Brazil}
\email{flores@ufsm.br}
\author[A. Sant'Ana]{Alveri Sant'Ana}
\address{Instituto de Matem\'atica, Universidade Federal do Rio Grande do Sul, Brazil}
\email{alveri@mat.ufrgs.br}

\thanks{\noindent 2000 \emph{Mathematics Subject Classification: 16S40; 16T05.\newline}}

\keywords{$(e,f)$-invertible element, inner actions, weak Hopf algebras, double smash}

\begin{abstract}
Let $R$ be an associative ring and $e,f$ idempotent elements of $R$. In this paper we introduce
the notion of $(e,f)$-invertibility for an element of $R$ and use it to define inner actions of weak Hopf algebras.
Given a  weak Hopf algebra $H$ and an algebra $A$, we present sufficient conditions for $A$ to admit an inner action of $H$.
We also prove that if $A$ is a left $H$-module algebra then $H$ acts innerly on the smash product $A\#H$ if and only if $H$ is a quantum commutative weak Hopf algebra.
\end{abstract}

\maketitle


\section*{Introduction}
Weak Hopf algebras were introduced in the literature, in a purely
algebraic approach, by G. B\"{o}hm, F. Nill and K. Szlach\'anyi. Roughly speaking, every weak Hopf algebra has the structure of an algebra and of a coalgebra, but the compatibility between these structures are weakened. Ordinary Hopf algebras and groupoid algebras are standard examples of weak Hopf algebras. Several authors have been publishing results in the setting of weak Hopf algebras generalizing known results for Hopf algebras.

Inner actions of Hopf algebras on algebras were introduced by M. Sweedler in \cite{Sw}. Among other things, the existence of inner actions was
used by Sweedler to obtain results in the setting of the cohomology of Hopf algebras. Later,
R. Blattner, M. Cohen and
S. Montgomery presented in \cite{BCM} a new approach to this subject. Those authors established the foundations of inner actions theory of arbitrary
Hopf algebras. Moreover, inner actions were used to study crossed products. This subject has attracted the attention of several researchers since \cite{BCM}.

The main purpose of this paper is to extend the notion of inner actions given in \cite{BCM} to the setting of weak Hopf algebras. In her PhD thesis \cite{F}, the second author has studied actions of groupoid algebras on the correspondent smash product by conjugation (inner actions of groupoids). With the purpose to generalize these results to the setting of weak Hopf algebras we needed to introduce the concept of inner actions of weak Hopf algebras on an arbitrary algebra.

Let $H$ be a weak Hopf algebra. The natural generalization of conjugation in $H$ by elements of $H$ is the action induced by the adjoint map $ad_h:H\rightarrow H$, defined by $ad_h(g) = h_1gS(h_2)$, for any $h, g \in H$. It is well known that the $\F$-linear map $\cdot:H\otimes H \to H$ given by $h\cdot g = ad_h(g)$, where $h,g \in H$, does not induce a structure of left $H$-module algebra on $H$ (see \cite{AVRC}). In fact, it is true that this map defines a left $H$-module algebra on $H$ if and only if $H$ is quantum commutative (see Theorem 2.15 of \cite{AVRC}).

For an arbitrary left $H$-module algebra $A$ we present in Theorem \ref{analogousLemma1.4forWHA} sufficient conditions that to ensure the existence of inner actions of $H$ on $A$. Also, we prove that if $A$ is a left $H$-module algebra, then $H$ acts innerly on the smash product $A\#H$ if and only if $H$ is quantum commutative(see Proposition \ref{some_equivalences}).

The paper is organized as follows. In Section 1, we recall well-known results related to weak Hopf algebras. The notion of $(e,f)$-invertibility for
any associative ring $R$, where $e,f\in R$ are idempotent elements, is studied in Section 2. Also, we discuss the relation among this notion and the following concepts: normalized pseudo-inverse, Moore-Penrose inverse, Drazin inverse and corner-inner automorphism.
We introduce inner actions of weak Hopf algebras in Section 3, and the main purpose of this section is to investigate under what conditions there exists a inner action of $H$ on an algebra $A$.
In Section 4, we investigate the relationship between adjoint actions and quantum commutativity for weak Hopf algebras under our insight. Particulary, we prove that if $A$ is a left $H$-module algebra then $H$ acts by adjoint action on the smash product $A\#H$ if and only if $H$ is a quantum commutative weak Hopf algebra.

Throughout this paper, $\F$ denotes a field and any $\F$-algebra is assumed to be associative and unital. By $C(A)$ we denote the center of a
$\F$-algebra $A$. Given $S\subset A$ we denote by $C_{A\otimes A}(S)$ the subset of $S$-central elements of $A\otimes A$, that is, $C_{A\otimes A}(S)=\{y\in A\otimes A\,:\,(s\otimes 1)y=y(1\otimes s), \text{ for all } s\in S\}$.
For any $\F$-coalgebra $C$ we denote the coproduct by  Sweedler's notation: $\Delta(c) = c_1 \otimes c_2$, $c\in C$ (summation understood).


\section{Preliminaries}

We recall some classical definitions and results about weak Hopf algebras
which we will use; see, {\em e. g.}, \cite{BNS}, \cite{NikVai} and \cite{NSW} for details and proofs.

\begin{definition}
A {\em weak Hopf algebra} over $\F$ is a six-tuple $(H, m, \mu,\Delta,\varepsilon,S)$ where $H$
is a finite dimensional $\F$-vector space such that $(H, m,
\mu)$ is an algebra, $(H, \Delta, \varepsilon)$ is a coalgebra,
$S:H\to H$ is a $\F$-linear map and the following conditions hold: for any $g,h,l\in H$,
\begin{align}\label{weak}
&\Delta(hg) = \Delta (h) \Delta(g),& \\ \label{1.1}
&\Delta^2(1)=(\Delta(1) \otimes 1)(1\otimes \Delta(1))=(1 \otimes
\Delta(1)) (\Delta(1) \otimes 1),&\\\label{1.2}
&\varepsilon(hgl)=\varepsilon (hg_1) \varepsilon (g_{2}
l)=\varepsilon (hg_2)\varepsilon (g_{1} l),&\\\label{1.5}
& h_1 S(h_2)=\varepsilon (1_1 h) 1_2, &\\ \label{1.6}
&S(h_1) h_2= 1_1 \varepsilon (h 1_2),& \\  \label{1.7}
&S(h_1) h_2 S(h_3)= S(h).&
\end{align}
\end{definition}

The $\F$-linear map $S$ is called the {\em antipode} of $H$. The antipode is the unique invertible anti-algebra and anti-coalgebra homomorphism satisfying the conditions \rm{(4)}-\rm{(6)}.

The {\em target} and the {\em source
counital maps} are given by \eqref{1.5} and \eqref{1.6} and denoted by $\varepsilon_t$ and $\varepsilon_s$,
respectively. Thus,
\begin{align*}
&\varepsilon_{t}(h)=(\varepsilon \otimes id)(\Delta(1)(h \otimes
1))\quad\,\,\,\, \text{and}& &\varepsilon_{s}(h)=(id \otimes
\varepsilon)((1 \otimes h)(\Delta(1)).&
\end{align*}
It is clear that $\varepsilon_{t}$ and $\varepsilon_{s}$ are
$\F$-linear idempotents maps. Furthermore, $S \circ
\varepsilon_t = \varepsilon_s \circ S$ and $S \circ \varepsilon_s
= \varepsilon_t \circ S$.
The {\em target} and the {\em source counital subalgebras} of $H$ are defined
respectively by
\begin{align*}
& H_{t} = \{ h \in H : \varepsilon_{t} (h) = h\}\quad
\text{and} \quad H_{s} = \{ h \in H : \varepsilon_{s} (h) = h\}.&
\end{align*}

\begin{proposition}{\label{used}}
Let $H$ be a weak Hopf algebra. For all $x_s \in H_s$, $x_t \in H_t$ and $g,h\in H$, the following statements hold:
\begin{enumerate}\renewcommand{\theenumi}{\roman{enumi}}   \renewcommand{\labelenumi}{(\theenumi)}
\item $\Delta(1) = 1_1 \otimes 1_2 \in H_s \otimes H_t$,
\item $\Delta(x_s) = 1_{1} \otimes x_s 1_2=1_{1} \otimes 1_2 x_s,\quad \Delta(x_t) = 1_{1} x_{t} \otimes 1_2= x_{t}1_{1}  \otimes 1_2$,
\item $\varepsilon_{s}(\varepsilon_{s}(h) g)=\varepsilon_{s}(hg)$,\quad $\varepsilon_{s}(h) g = g_1 \varepsilon(hg_2)$,\quad
$\varepsilon_s(h \varepsilon_s(g))= \varepsilon_s(h) \varepsilon_s(g)$,
\item $\varepsilon_t(h \vet(g)) = \varepsilon_t(hg)$,\quad $h\varepsilon_t(g)= \varepsilon(h_1g)h_2$,\quad
$\varepsilon_t(\varepsilon_t(h) g)= \varepsilon_t(h) \varepsilon_t(g)$. \qd
\end{enumerate}
\end{proposition}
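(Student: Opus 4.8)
The plan is to derive all four items directly from the axioms by Sweedler-calculus computations, organized around two structural inputs: the weak comultiplicativity of the unit \eqref{1.1}, which governs everything involving $\Delta(1)$, and the weak multiplicativity of the counit \eqref{1.2}, which governs the sliding identities in (iii) and (iv). First I would record the closed forms $\varepsilon_t(h)=h_1S(h_2)$ and $\varepsilon_s(h)=S(h_1)h_2$, immediate from \eqref{1.5}, \eqref{1.6} and the definitions, together with the elementary consequences of multiplicativity of $\Delta$, namely $h_1\otimes h_2=1_1h_1\otimes1_2h_2=h_11_1\otimes h_21_2$. I will use repeatedly that \eqref{1.1} gives two factorizations of the coassociative triple $\Delta^2(1)=1_1\otimes1_2\otimes1_3$, and in particular the consequences $\Delta(1_2)=1_2\otimes1_3$ and $1_1\otimes 1_1'1_2\otimes1_2'=\Delta^2(1)$ (primes denoting a second copy of $\Delta(1)$).

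For (i) I would compute $(\mathrm{id}\otimes\varepsilon_t)\Delta(1)$ and $(\varepsilon_s\otimes\mathrm{id})\Delta(1)$ and show each equals $\Delta(1)$. Expanding $\varepsilon_t(1_2)=\varepsilon(1_1'1_2)1_2'$ and collapsing the two copies of $\Delta(1)$ by the factorization of $\Delta^2(1)$ reduces the middle leg by counitality, giving $1_1\otimes\varepsilon_t(1_2)=1_1\otimes1_2$; the source statement is the mirror. This is exactly the assertion that $1_1\in H_s$, $1_2\in H_t$ in the required tensor sense. Item (ii) then follows by applying $\Delta$ to $x_t=\varepsilon_t(x_t)=\varepsilon(1_1x_t)1_2$: using $\Delta(1_2)=1_2\otimes1_3$ one gets $\Delta(x_t)=\varepsilon(1_1x_t)1_2\otimes1_3$, and reconciling this with $1_1x_t\otimes1_2$ and $x_t1_1\otimes1_2$ is a matter of moving $x_t$ past one leg of $\Delta(1)$ via the factorizations; the formulas for $x_s$ are obtained identically, or by transporting the target case through $S$ using $S\circ\varepsilon_t=\varepsilon_s\circ S$.

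The heart of (iii) and (iv) are the two sliding identities $\varepsilon_s(h)g=g_1\varepsilon(hg_2)$ and $h\varepsilon_t(g)=\varepsilon(h_1g)h_2$. For the first I would start from $g_1\varepsilon(hg_2)$, insert $\Delta(1)$ through $g_1\otimes g_2=1_1g_1\otimes1_2g_2$, and then apply \eqref{1.2} to merge the counit factors, collapsing to $1_1\varepsilon(h1_2)g=\varepsilon_s(h)g$; the second is the left-right and target mirror, again deducible via $S$. The remaining identities are corollaries: applying \eqref{1.2} in the form $\varepsilon_s(hg)=\varepsilon(hg_1)\varepsilon_s(g_2)$ together with the sliding identity yields $\varepsilon_s(\varepsilon_s(h)g)=\varepsilon_s(hg)$, while specializing $g$ to $\varepsilon_s(g)$, whose coproduct is governed by (ii), yields $\varepsilon_s(h\varepsilon_s(g))=\varepsilon_s(h)\varepsilon_s(g)$ after a short reduction; the three statements in (iv) are the $\varepsilon_t$-analogues, obtained verbatim after the source-target swap.

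I expect the main obstacle to be item (i) together with the precise bookkeeping of ``which leg'' in the sliding identities. Everything downstream rests on $\Delta(1)\in H_s\otimes H_t$, and establishing it cleanly forces one to use \emph{both} factorizations of $\Delta^2(1)$ in \eqref{1.1}, not just one; similarly, in (iii)--(iv) the only real risk is confusing left with right multiplication, and source with target, when inserting $\Delta(1)$ and applying \eqref{1.2}. The organizing device that keeps the computation honest is the antipode duality $S\circ\varepsilon_t=\varepsilon_s\circ S$, which lets me prove each target statement and then read off its source counterpart, effectively halving the work.
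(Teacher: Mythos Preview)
The paper does not prove this proposition: it is stated in the preliminaries as a recollection of well-known facts, with the line ``see, e.g., \cite{BNS}, \cite{NikVai} and \cite{NSW} for details and proofs'' and a bare $\Box$ at the end. So there is no in-paper argument to compare against; your proposal supplies exactly the kind of direct Sweedler-calculus verification that the cited references (especially \cite{BNS}) carry out. Your organization---using axiom~\eqref{1.1} for the $\Delta(1)$-items (i)--(ii) and axiom~\eqref{1.2} for the sliding identities in (iii)--(iv), together with the source--target duality via $S$---is the standard route and is correct.

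One small point worth tightening: in your derivation of $\varepsilon_s(h)g=g_1\varepsilon(hg_2)$ you say you will ``insert $\Delta(1)$ through $g_1\otimes g_2=1_1g_1\otimes1_2g_2$ and then apply \eqref{1.2}''. The cleanest way to make this precise is to first extract from \eqref{1.2} with middle factor $1$ the identity $\varepsilon(hg)=\varepsilon(h1_2)\varepsilon(1_1g)$, and then compute
\[
g_1\varepsilon(hg_2)=g_1\varepsilon(h1_2)\varepsilon(1_1g_2)=1_1g_1\varepsilon(1_2g_2)\varepsilon(h1_3)=1_1\varepsilon(h1_2)\,g=\varepsilon_s(h)g,
\]
using the factorization of $\Delta^2(1)$ and counitality. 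This is presumably what you intend; just make the use of \eqref{1.2} explicit so the ``merge the counit factors'' step is unambiguous.
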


Let $H$ be a weak Hopf algebra and $A$ a $\F$-algebra. Suppose that $A$ is a left $H$-module and denote by $h \cdot x$ the action of $h\in H$ on $x\in A$. Then, $A$ is called a {\em left $H$-module algebra} if
\begin{align*}
&h \cdot xy = (h_1 \cdot x)(h_2 \cdot y),& &h \cdot 1_{A}=\varepsilon_{t}(h) \cdot 1_A,& &\text{ for all } x,y \in A,\,\,h \in H.&
\end{align*}

In this case, $A$ is a right $H_t$-module with action given by
\begin{align}\label{actiondimitri}
& x \cdot z: = S^{-1}(z) \cdot x = x (z \cdot 1),& &\text{ for all } x \in A,\,\,z \in H_t.&
\end{align}
Since $H$ is a left $H_t$-module via multiplication, we can consider the {\it smash product algebra} $A \# H$, that is, the $\F$-vector space $A \otimes_{H_t} H$ with product
\begin{align}\label{smash}
&(x \# h)(y \# g)=x(h_1 \cdot y) \# h_2 g,& &x,y\in A,\quad g,h\in H.&
\end{align}
The smash product $A\#H$ has a unity given by $1_{A} \# 1_{H}$. Also, we have that
\begin{align}\label{impor}
(x \# 1)(y \# 1)=(x(1_1\cdot y))\cdot 1_2\#1
\stackrel{\eqref{actiondimitri}}{=}x(1_1\cdot y)(1_2\cdot 1)\#1=xy\#1,
\end{align}
for all $x,y\in A$.


\section{$(e,f)$-invertibility}

With the purpose to present the concept of inner actions of weak Hopf algebras will be necessary to consider a kind of weak invertibility in a specific convolution algebra. In this section we present such a notion for arbitrary associative rings, and we relate it to other well known concepts which already have appeared in the literature.

\begin{definition} \label{definvert}
Let $R$ be a ring and $e, f$ nonzero idempotent elements in $R$. An element $u\in R$ is  said {\it $(e,f)$-invertible} if there exists $v\in R$ such that:
\begin{enumerate}\renewcommand{\theenumi}{\roman{enumi}}   \renewcommand{\labelenumi}{(\theenumi)}
\item $uv = e$ and $vu = f$;
\item $u = uf$ and $fv = v$.
\end{enumerate}
In this case, we say that $v$ is {\it an $(e,f)$-inverse} of $u$.
\end{definition}
Note that if $v$ is an $(e,f)$-inverse of $u$ in $R$, then it follows from the definition that $ve = v$ and
$eu = u$. Suppose now that $v$ and $v^{\prime}$ are $(e,f)$-inverses of $u$. Then,
$$v^{\prime} = fv^{\prime} = (vu)v^{\prime} = v(uv^{\prime}) = ve = v.$$
Thus, the $(e,f)$-inverse of $u$ is unique, if it exists.
\smallbreak\smallbreak

\noindent {\it Notation:} Let $u$ be an $(e,f)$-invertible element in $R$. The unique $(e,f)$-inverse  element will be
denoted by $u^{-1}$.
\smallbreak\smallbreak

\begin{remark}
Of course, if $R$ is a ring with unity $1_R$ and we take $e = 1_R = f$ in Definition \ref{definvert}, then the $(1_R,1_R)$-invertible elements coincide with the invertible elements in the ordinary sense. Hence, $(e,f)$-invertibility is a generalization of invertibility. Moreover, any idempotent element $e\in R$ is $(e,e)$-invertible and $e^{-1}=e$.
\end{remark}

Observe that we could have defined $(e,f)$-invertibility without the last condition in Definition \ref{definvert}, but in this case we would lose the uniqueness. Moreover, in this hypothetic situation, if $v$ is an $(e,f)$-inverse of $u$ in $R$, then
\[(eu)(ve) = eee = e,\,\,\, (ve)(eu) = v(uv)(uv)u = fff = f,\,\,\, f(ve) = ve,\,\,\, (eu)f = eu.\]
Thus, if $u, v, e, f \in R$ are as in Definition \ref{definvert}(i), then $ve$ is an $(e,f)$-inverse of $eu$ and the elements $eu$ and $ve$ are the unique elements which satisfy the condition (ii) in Definition \ref{definvert}. So, changing $u$ and $v$ by $eu$ and $ve$ respectively, we can insert the condition (ii) in the Definition \ref{definvert} above.
\smallbreak \smallbreak

\noindent {\it Normalized pseudo-inverse.} We recall that an element $r$ in a ring $R$ is called {\it regular} if there exists $s\in R$ such that $r = rsr$. In this case, the element $s$ is called a {\it generalized inverse} (or sometimes {\it pseudo-inverse}) of $r$. It is clear that in this situation we have that $e := rs$ and $f := sr$ are idempotent elements of $R$ and $r = rf (= er) $. Moreover, we say that a generalized inverse $s$ of $r$ is {\it normalized} if $s$ is a regular element of $R$ with generalized inverse equal to $r$, that is, $r = rsr$ and $s = srs$. In this case, we have that $rs = e$, $sr = f$, $e^2 = e$, $f^2 = f$, $r = rf (=er)$ and $s = fs (=se)$. Thus,  if $r$ has a normalized generalized inverse $s$, then $r$ is $(rs, sr)$-invertible and $r^{-1}=s$.

Conversely, if $e,f$ are idempotent elements in $R$ and $u\in R$ is an $(e,f)$-invertible element, then it is easy to check that
\[uu^{-1}u = uf =u,\quad u^{-1}uu^{-1} = fu^{-1} = u^{-1},\] so that $u$ is a regular element with normalized generalized inverse given by $u^{-1}$.
\smallbreak \smallbreak

\noindent {\it Moore-Penrose inverse.} An involution in a ring $R$ is an additive  map $\ast:R\rightarrow R$, $r\mapsto r^{\ast}$, such that $(rs)^{\ast} = s^{\ast}r^{\ast}$ and $(r^{\ast})^{\ast} =r$. Suppose that $r\in R$ is a regular element with normalized generalized inverse $s$. Then it is possible to ask if the idempotent elements $e = rs$ and $f = sr$ are {\it self-adjoint}, that is, $e^{\ast} = e$ and $f^{\ast} = f$. If this happens, then we say that $s$ is the {\it Moore-Penrose inverse} of $r$; see \cite{P} for more details. Thus, in particular, Moore-Penrose invertible elements in an algebra with involution are $(e,f)$-invertible elements. For example, in \cite{HM} it was showed that every regular element in a $C^{\ast}$-algebra has Moore-Penrose inverse.
\smallbreak \smallbreak

\noindent {\it Drazin inverse.} In \cite{D}, the author introduced the concept of {\it pseudo-inverse} of an element $x$ in a ring (or a semigroup) $R$ as being an element $c\in R$ such that :
\begin{enumerate}\renewcommand{\theenumi}{\roman{enumi}}   \renewcommand{\labelenumi}{(\theenumi)}
\item $cx = xc$;
\item $x^m = x^{m+1}c$, for some positive integer $m$;
\item $c = c^2x$.
\end{enumerate}

It was showed in \cite{D} that if such an element $c$ exists, then it is uniquely determined and it commutes with every element of $R$ which commutes with $x$. Moreover, in the literature, the pseudo-inverse of an element $x\in R$ as defined in \cite{D} is called the {\it Drazin inverse} of $x$ and it is denoted by $x^D$. The smallest integer $m$ such that $x^m = x^{m+1}x^D$ is called the {\it Drazin index of $x$} and it is denoted by $i_D(x)$.
We observe that, in our context, if $u$ is $(e,e)$-invertible then $u$ has Drazin inverse and $i_D(u) = 1$. In fact,
\[uu^{-1} = e = u^{-1}u,\quad u^2u^{-1} = ue = u,\quad {(u^{-1})}^2u = u^{-1}e = u^{-1}.\]
A Drazin inverse with index $1$ was considered before by G. Azumaya, who called such an element {\it $\pi$-regular element}; see \cite{A}.
\smallbreak \smallbreak

\noindent {\it Corner-inner automorphism.} We recall from \cite{Cr}, that an automorphism $f$ of a ring $R$ is called {\it corner-inner} if there exists a nonzero idempotent $e\in R$ such that for
$e'=f^{-1}(e)$ there exist elements $u\in eRe'$ and $v\in e'Re$ such that $uv=e$, $vu=e'$ and $f(x)=uxv$, for all $x\in e'Re$. Note that, in this case, $u$
is $(e,e')$-invertible with inverse $v$.

\smallbreak \smallbreak
Let $C$ be a coalgebra and $A$ an algebra. Then it is well-known that $\Hom (C, A)$ is an algebra with the convolution product, that is,
if $\varphi, \psi \in \Hom(C, A)$ and $c\in C$, then $\varphi \ast \psi (c) = \varphi(c_1)\psi(c_2)$.

\begin{example} \label{example_invertibility}
Let $H$ be a weak Hopf algebra. Observe that $\varepsilon_t, \varepsilon_s$ are idempotents in
$\Hom(H,H)$. Consider the map $u(h) = h$, for all $h\in H$. Then $u$ is
$(\varepsilon_t,\varepsilon_s)$-invertible with $(\varepsilon_t,\varepsilon_s)$-inverse $u^{-1}(h)=S(h)$, where
$S$ is the antipode map. Indeed,
\begin{align*}
&u\ast u^{-1}(h) = h_1S(h_2) = \varepsilon_t(h),\quad u^{-1}\ast u(h) = S(h_1)h_2 = \varepsilon_s(h),&  \\
&\hspace{2.2cm}  u\ast f (h) = h_1\varepsilon_s(h_2) = h = u(h),& \\
&f\ast u^{-1}(h) = \varepsilon_s(h_1)S(h_2) = S(h_1)h_2S(h_3)=S(h) = u^{-1}(h),\quad  h\in H.&\
\end{align*}
\end{example}
\smallbreak

Let $C$ be a coalgebra. We recall that the coradical filtration of $C$ is the family $\{C_n\}_{n\geq 0}$ of subcoalgebras of $C$ satisfying:
\begin{enumerate}\renewcommand{\theenumi}{\roman{enumi}}   \renewcommand{\labelenumi}{(\theenumi)}
\item $C_0 \subset C_1 \subset \cdots \subset C_n \subset \cdots$;
\item $C = \cup_{n\geq 0} C_n$;
\item $C_0$ is the coradical of $C$ and
$C_n = \Delta^{-1}(C\otimes C_{n-1} + C_0\otimes C)$, for all $n\geq 1$.
\end{enumerate}

Let $A$ be an algebra. Note also that if $\gamma_1, \cdots, \gamma_n \in \Hom(C, A)$ are such that $\gamma_i(C_0) = 0$, $1\leq i \leq n$,  then
$\gamma_1 \ast \cdots \ast \gamma_n(C_{n-1}) = 0$. In particular, $\gamma^n(C_{n-1}) = 0$ for all $\gamma \in \Hom(C, A)$
such that $\gamma(C_0) = 0$. In this case, $\Theta := \sum_{n\geq 0} \gamma^n$ is a well defined map of $\Hom(C,A)$.

This last argumentation was used to prove Lemma 14 of \cite{Ta2}. Within the context of $(e,f)$-invertibility, we prove a similar result with almost the same argumentation as will be seen in the next result. First, we fix some notation. If $\psi \in \Hom (C,A)$,  then we will denote by $\psi_0$ its restriction to the coradical $C_0$ of $C$.

\smallbreak
\smallbreak

\begin{proposition}
Let $C$ be a coalgebra, $A$ an algebra, $e, f \in \Hom(C,A)$ idempotent elements and $\varphi \in \Hom(C, A)$ such that $e \ast \varphi = \varphi = \varphi \ast f$. Then $\varphi$ is $(e,f)$-invertible in $\Hom(C,A)$ if and only if $\varphi_{0}$ is $(e_0, f_0)$-invertible in $\Hom(C_0, A)$.
\end{proposition}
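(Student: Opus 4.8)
The plan is to use that restriction to the coradical is a homomorphism of convolution algebras, which makes one implication automatic, and then to recover an inverse over all of $C$ from one over $C_0$ by the successive-approximation argument of Taft type recalled just before the statement.

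Write $R=\Hom(C,A)$ and let $\rho\colon R\to\Hom(C_0,A)$, $\gamma\mapsto\gamma_0$, be restriction. Since $C_0$ is a subcoalgebra we have $\Delta(C_0)\subseteq C_0\otimes C_0$, so for $c\in C_0$ the convolution $(\gamma\ast\gamma')(c)=\gamma(c_1)\gamma'(c_2)$ only involves values on $C_0$; hence $\rho$ is an algebra homomorphism carrying the convolution unit to the convolution unit and $e,f$ to idempotents $e_0,f_0$. For the forward implication I would simply apply $\rho$ to the four identities of Definition \ref{definvert} for $\varphi$ and its inverse $\psi$, obtaining at once that $\psi_0$ is an $(e_0,f_0)$-inverse of $\varphi_0$.

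For the converse, the hypothesis $e\ast\varphi=\varphi=\varphi\ast f$ places $\varphi$ in the corner $eRf=\{\gamma:e\ast\gamma=\gamma=\gamma\ast f\}$, and any $(e,f)$-inverse must lie in $fRe$. Let $\psi_0'$ be the $(e_0,f_0)$-inverse of $\varphi_0$, so $f_0\ast\psi_0'=\psi_0'=\psi_0'\ast e_0$. First I would lift $\psi_0'$ to $C$: choose a vector-space splitting $C=C_0\oplus C'$, extend $\psi_0'$ by $0$ on $C'$, and set $\beta:=f\ast(\text{this extension})\ast e\in fRe$; then $\beta_0=f_0\ast\psi_0'\ast e_0=\psi_0'$. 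Now $\varphi\ast\beta\in eRe$ agrees with $e_0$ on $C_0$, so $\delta:=e-\varphi\ast\beta$ vanishes on $C_0$; by the nilpotency on the coradical filtration recalled above, $\delta^n$ kills $C_{n-1}$, so $\Sigma:=\sum_{n\ge 0}\delta^n$ (with $\delta^0:=e$) is well defined in $eRe$ and telescopes to $\Sigma\ast(e-\delta)=e=(e-\delta)\ast\Sigma$. Symmetrically $\beta\ast\varphi\in fRf$ is invertible there with inverse $\Sigma'$. Putting $\psi:=\beta\ast\Sigma$ and $\psi':=\Sigma'\ast\beta$ (both in $fRe$) gives $\varphi\ast\psi=(\varphi\ast\beta)\ast\Sigma=e$ and $\psi'\ast\varphi=\Sigma'\ast(\beta\ast\varphi)=f$.

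Finally the one-sided/two-sided inverse trick inside the corners yields $\psi'=\psi'\ast e=\psi'\ast\varphi\ast\psi=f\ast\psi=\psi$, so the single map $\psi$ satisfies $\varphi\ast\psi=e$ and $\psi\ast\varphi=f$; combined with $\psi=f\ast\psi$ (as $\psi\in fRe$) and the hypothesis $\varphi=\varphi\ast f$, this verifies Definition \ref{definvert}, and $\psi$ is the sought $(e,f)$-inverse. I expect the delicate part to be entirely in this converse: arranging the extension $\beta$ to live in the corner $fRe$, and carrying out the geometric-series inversion inside the corner rings $eRe$ and $fRf$, where $e$ (resp.\ $f$) rather than the global convolution unit plays the role of identity. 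Convergence of $\Sigma$ and $\Sigma'$ is not an issue, being guaranteed degree by degree by the coradical filtration.
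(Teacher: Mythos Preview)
Your proposal is correct and follows essentially the same route as the paper: extend the $(e_0,f_0)$-inverse by zero off $C_0$, form the ``error'' elements $e-\varphi\ast(\text{extension})$ and $f-(\text{extension})\ast\varphi$ which vanish on $C_0$, sum the resulting geometric series using the coradical filtration, and conclude with the standard left/right inverse comparison. The only difference is organizational: you normalize the lifted inverse into the corner $fRe$ at the outset and take $\delta^0=e$ so that the whole computation lives in the corner rings $eRe$, $fRf$, whereas the paper works with the global convolution unit in its series $\Theta_e,\Theta_f$ and performs the normalization (multiplying by $e$, $f$) only at the very end when defining $\lambda=\Theta_f\ast\Psi\ast e=f\ast\Psi\ast\Theta_e$.
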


\begin{proof}
Clearly the $(e,f)$-invertibility of $\varphi$ implies the $(e_0, f_0)$-invertibility of $\varphi_0$. Conversely, denote by $\psi: C_0 \rightarrow A$ the $(e_0,f_0)$-inverse of $\varphi_0$ in $\Hom(C_0, A)$. Then,
\[\varphi_0 \ast \psi = e_0, \,\,  \psi \ast \varphi_0 = f_0, \,\, \psi \ast e_0 = \psi = f_0 \ast \psi \,\, \text{ and }  \,\, \varphi_0 \ast f_0 =
\varphi_0 = e_0 \ast \varphi_0.\]
Let $D$ be any complement of $C_0$ in $C$ as a vector space. Consider $\Psi \in \Hom(C, A)$ given by $\Psi_0 = \psi$ and
$\Psi|_{D} = 0$. We define the maps $\gamma_e, \gamma_f \in \Hom(C,A)$ by $\gamma_e = e - (\varphi \ast \Psi)$ and
$\gamma_f = f - (\Psi \ast \varphi)$. Since $\gamma_e$ and $\gamma_f$ vanish in $C_0$, it follows that $\Theta_e = \sum_{n\geq 0} \gamma_e^n$ and
$\Theta_f = \sum_{n\geq 0} \gamma_f^n$ are well defined maps. Because $e \ast \varphi = \varphi = \varphi \ast f$, it follows that $e\ast\gamma_e = \gamma_e$ and $f\ast \gamma_f = \gamma_f$. Using these observations we have the following equalities:
$$ (\varphi \ast \Psi) \ast \Theta_e = (e-\gamma_e)\ast\Theta_e = e,\,\,\,\,
\Theta_f \ast (\Psi \ast \varphi) = \Theta_f \ast (f-\gamma_f) = f.$$
Now we observe that
\begin{align*}
\Theta_f \ast \Psi \ast e  & = \Theta_f \ast \Psi \ast (\varphi \ast \Psi \ast \Theta_e) \\
                           & = (\Theta_f \ast \Psi\ast \varphi) \ast \Psi \ast \Theta_e \\
                           & =  (f \ast \Psi) \ast \Theta_e.
\end{align*}
Hence, if we call $\lambda := \Theta_f \ast \Psi \ast e  = f \ast \Psi \ast \Theta_e$, then it follows that $\lambda$ is the $(e, f)$-inverse of $\varphi$ in $\Hom(C, A)$.
\end{proof}

\section{Inner actions of weak Hopf algebras}
In this section we introduce the notion of inner  actions for weak Hopf algebras. We give sufficient conditions for an algebra to admit an inner  action of a weak Hopf algebra. We already noted that Sweedler introduced the notion of inner actions of Hopf algebras, but the interest in this kind of actions became stronger after \cite{BCM}.

We recall from \cite{BCM} the definition of inner actions of $H$ on $A$.

\begin{definition} \label{inneractionHA}
Let $H$ be a Hopf algebra and $A$ a left  $H$-module algebra with action denoted by $h\cdot a$. We say that this action is {\it inner}, if there exists an invertible  element $u\in \Hom(H,A)$ such that $h\cdot a = u(h_1)au^{-1}(h_2)$, for all $h\in H$ and $a\in A$.
\end{definition}

Let $C$ be a coalgebra, $A$ an algebra and $u\in \Hom(C,A)$ an invertible element. Following \cite{BCM}, we consider the bilinear map $t:C\times C \rightarrow A$, given by:
\begin{align}\label{bilinear}
 & t(x,y) = u^{-1}(y_1)u^{-1}(x_1)u(x_2y_2),& & x,y\in C.&
\end{align}

The next result is the Lemma 1.4 of \cite{BCM} which presents necessary and sufficient conditions for an action of  Hopf algebras to be inner.

\begin{proposition} \label{Lemma1.4BCM}
Let $H$ be a Hopf algebra, $A$ an algebra and $u\in \Hom(H,A)$ invertible and $h\cdot a := u(h_1)au^{-1}(h_2)$, for all $h\in H$, $a\in A$.
Then the following statements hold: for all $g,h\in H$ and $a,b\in A$,
\begin{enumerate}\renewcommand{\theenumi}{\roman{enumi}}   \renewcommand{\labelenumi}{(\theenumi)}
\item $u(1)$ is invertible in $A$ and $u(1)^{-1} = u^{-1}(1)$;
\item $h\cdot(ab) = (h_1\cdot a)(h_2\cdot b)$ and $h\cdot 1 = \varepsilon(1)1$;
\item $1 \cdot a = a$ if and only if $u(1) \in C(A)$;
\item $g\cdot (h \cdot a) = (gh)\cdot a$ if and only if $t(H\times H) \subseteq C(A)$.
\end{enumerate}
\end{proposition}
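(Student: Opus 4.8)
The plan is to dispatch the four items in order, with all the substance concentrated in (iv). Throughout I would use that for a Hopf algebra the unit is grouplike, so $\Delta(1)=1\otimes 1$ and $\varepsilon(1)=1$, and that $u$ being invertible in $\Hom(H,A)$ means $u\ast u^{-1}=u^{-1}\ast u=\varepsilon(-)1_A$, the unit of the convolution algebra. For (i), evaluating $u\ast u^{-1}$ and $u^{-1}\ast u$ at $1$ and using $\Delta(1)=1\otimes 1$ gives $u(1)u^{-1}(1)=\varepsilon(1)1_A=1_A=u^{-1}(1)u(1)$, so $u(1)$ is invertible in $A$ with inverse $u^{-1}(1)$. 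For the first half of (ii), I would write $h\cdot(ab)=u(h_1)\,ab\,u^{-1}(h_2)$ and $(h_1\cdot a)(h_2\cdot b)=u(h_1)\,a\,u^{-1}(h_2)u(h_3)\,b\,u^{-1}(h_4)$, collapse the middle factor $u^{-1}(h_2)u(h_3)$ via $u^{-1}\ast u=\varepsilon(-)1_A$, and remove the resulting $\varepsilon$ by the counit axiom; the value $h\cdot 1=u(h_1)u^{-1}(h_2)=\varepsilon(h)1_A$ is immediate (I read the right-hand side of the stated formula as $\varepsilon(h)1_A$). For (iii), $\Delta(1)=1\otimes 1$ gives $1\cdot a=u(1)\,a\,u^{-1}(1)=u(1)\,a\,u(1)^{-1}$ by (i), and this equals $a$ for every $a$ exactly when $u(1)\in C(A)$.

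The heart of the matter is (iv). My plan is to rewrite both $g\cdot(h\cdot a)$ and $(gh)\cdot a$ so that they differ only by the position of a single factor $t(g_2,h_2)$ relative to $a$. For this I would first establish two convolution identities for $t$,
\begin{align*}
u(x_1)u(y_1)\,t(x_2,y_2) &= u(xy), & t(x_1,y_1)\,u^{-1}(x_2y_2) &= u^{-1}(y)u^{-1}(x),
\end{align*}
each proved by expanding the definition of $t$ and contracting adjacent $u,u^{-1}$ pairs through $u\ast u^{-1}=u^{-1}\ast u=\varepsilon(-)1_A$ together with the counit axiom. Using $\Delta(gh)=\Delta(g)\Delta(h)$ and the first identity on $(gh)\cdot a=u((gh)_1)\,a\,u^{-1}((gh)_2)$, and the second identity to replace $u^{-1}(h_2)u^{-1}(g_2)$ in $g\cdot(h\cdot a)=u(g_1)u(h_1)\,a\,u^{-1}(h_2)u^{-1}(g_2)$, I would bring both expressions to the common shape
\begin{gather*}
(gh)\cdot a = u(g_1)u(h_1)\,t(g_2,h_2)\,a\,u^{-1}(g_3h_3),\\
g\cdot(h\cdot a) = u(g_1)u(h_1)\,a\,t(g_2,h_2)\,u^{-1}(g_3h_3).
\end{gather*}
Subtracting, $(gh)\cdot a-g\cdot(h\cdot a)=u(g_1)u(h_1)\,[t(g_2,h_2),a]\,u^{-1}(g_3h_3)$, where $[\,\cdot\,,\,\cdot\,]$ denotes the commutator.

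From here the forward direction is immediate: if $t(H\times H)\subseteq C(A)$ the commutator vanishes and the two actions agree. The converse is the step I expect to be the main obstacle, since one must recover $[t(g,h),a]$ from the vanishing of the whole convolution expression. The plan is to show that the $\F$-linear operator $\Lambda$ on $\Hom(H\otimes H,A)$ given by $\Lambda(c)(g,h)=u(g_1)u(h_1)\,c(g_2,h_2)\,u^{-1}(g_3h_3)$ is injective, by exhibiting the explicit left inverse $c(g,h)=u^{-1}(h_1)u^{-1}(g_1)\,\Lambda(c)(g_2,h_2)\,u(g_3h_3)$; verifying this again amounts to peeling off the outer factors one at a time via the contractions $u^{-1}\ast u=\varepsilon(-)1_A$ and $u\ast u^{-1}=\varepsilon(-)1_A$ and the counit axiom. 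Applying this inverse to the hypothesis $\Lambda(c)=0$ with $c(g,h)=[t(g,h),a]$ forces $[t(g,h),a]=0$ for all $g,h$; since $a\in A$ is arbitrary, $t(g,h)\in C(A)$, as required. The delicate points throughout are bookkeeping ones: tracking the Sweedler indices correctly so that each contraction pairs the intended $u$ with the intended $u^{-1}$, which is exactly what makes the left-inverse verification the one place where care is needed.
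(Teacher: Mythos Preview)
The paper does not prove this proposition; it is simply quoted as Lemma~1.4 of \cite{BCM} and stated without argument, so there is no ``paper's own proof'' to compare against. Your proof is correct in all four parts, including the reading of the right-hand side in (ii) as $\varepsilon(h)1_A$ (the printed $\varepsilon(1)1$ is evidently a misprint carried over from the source).

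For context, the paper does prove the weak Hopf analogue in Theorem~\ref{analogousLemma1.4forWHA}(iii), and its method there is essentially the same mechanism as your (iv) but unpackaged: rather than isolating the two convolution identities for $t$ and introducing the operator $\Lambda$ with its explicit left inverse, it starts from the equality $g\cdot(h\cdot a)=(gh)\cdot a$, multiplies on the left by $u^{-1}(h_1)u^{-1}(g_1)$ and on the right by $u(g_\bullet h_\bullet)$, and simplifies line by line using $u\ast u^{-1}$ and $u^{-1}\ast u$ until the bare commutator $[t(g,h),a]$ emerges; the converse is handled by reversing those multiplications. Your formulation via $\Lambda$ and its inverse is the same computation organized more conceptually, and has the minor advantage that the injectivity step is isolated and reusable.
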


In what follows in this section we assume that $H$ is a weak Hopf algebra, $A$ is a $\F$-algebra, $e, f \in \Hom(H, A)$ are idempotent elements and $u$ is an $(e,f)$-invertible element of $\Hom(H,A)$. Now we propose a concept for inner actions of weak Hopf algebras.

\begin{definition} \label{inneractionWHA}
Let $A$ a left $H$-module algebra with action denoted by $h\cdot a$. We say that this action is {\it inner} if
\begin{align}\label{formula}
h\cdot a = u(h_1) a u^{-1}(h_2), \quad a\in A,\, h\in H.
\end{align}
In this case, we say that the inner action is {\em implemented} by $u$.
\end{definition}

\begin{remark}\label{idempotente}
	Let $A$ a left $H$-module algebra with inner action implemented by $u$. In this case, $e(h)=u(h_1)u^{-1}(h_2)=h\cdot 1$, for all $h\in H$. Moreover, we have $g\cdot e(h)=g\cdot(h\cdot 1)=gh\cdot 1=e(gh)$, for all $g,h\in H$.
\end{remark}

In the next result we investigate when the relation \eqref{formula} determines an structure of $H$-module algebra for $A$. Before that, we introduce more notation. Given $h\in H$, we fix $\varphi_{f,h}\in {\rm End}_{\F}(A)$ given by  $\varphi_{f,h}(a):=f(h_1)af(h_2)$, for all $a\in A$. We also fix $\lambda_{u,v}(h):=(u\otimes v)\circ \Delta (h)\in A\otimes A$, for all $h\in H$.

\begin{theorem} \label{analogousLemma1.4forWHA}
	The following statements hold:
	\begin{enumerate}\renewcommand{\theenumi}{\roman{enumi}}   \renewcommand{\labelenumi}{(\theenumi)}
		\item $h\cdot(ab) = (h_1\cdot a)(h_2\cdot b)$ if and only if $\varphi_{f,h}(ab)=\varphi_{f,h_1}(a)\varphi_{f,h_2}(b)$,  for all $h\in H$ and $a,b\in A$;\smallbreak
		\item $h\cdot 1 = \varepsilon_t(h)\cdot 1$ if and only if $e\circ \varepsilon_t = e$ if and only if $\kernel(\varepsilon_t)
		\subseteq \kernel(e)$, for all $h\in H$.\smallbreak
		\item Suppose $f(H) \subseteq C(A)$. Then, $g\cdot (h \cdot a) = (gh)\cdot a$ if and only if $g\cdot e(h)=e(gh)$ and $t(H\times H) \subseteq C(A)$, where $t$ is given in \eqref{bilinear}.\smallbreak
         \item If $u(H_s)\subseteq C(A)$ and $e(1)=1$ then $1\cdot a = a$, for all $a\in A$.\smallbreak
         \item If $\lambda_{u,v}(1)\in C_{A\otimes A}(u(H_s))$ and  $1\cdot a = a$ for all $a\in A$, then $e(1)=1$ and $u(H_s)\subset C(A)$.
\end{enumerate}
\end{theorem}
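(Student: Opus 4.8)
The plan is to treat the two conclusions separately: $e(1)=1_A$ falls out almost immediately from the stated description of $e$, while $u(H_s)\subseteq C(A)$ requires collapsing the tensor-central hypothesis with a sandwiching multiplication map.

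First I would dispose of $e(1)=1_A$. By Remark \ref{idempotente} we have $e(h)=u(h_1)u^{-1}(h_2)=h\cdot 1_A$ for every $h\in H$; in particular $e(1)=1\cdot 1_A$. Applying the hypothesis $1\cdot a=a$ with $a=1_A$ gives $e(1)=1_A$, with no computation beyond invoking the formula for $e$.

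The substance lies in proving $u(H_s)\subseteq C(A)$. Fix $x_s\in H_s$ and $a\in A$. Since $\lambda_{u,v}(1)=u(1_1)\otimes u^{-1}(1_2)$ (recall $v=u^{-1}$) and $u(x_s)\in u(H_s)$, the hypothesis $\lambda_{u,v}(1)\in C_{A\otimes A}(u(H_s))$ unwinds, by the definition of $C_{A\otimes A}(\cdot)$, to the identity $u(x_s)u(1_1)\otimes u^{-1}(1_2)=u(1_1)\otimes u^{-1}(1_2)u(x_s)$ in $A\otimes A$. The decisive step is to apply the $\F$-linear map $\theta_a\colon A\otimes A\to A$ defined by $\theta_a(p\otimes q)=paq$ to both sides. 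This produces $u(x_s)u(1_1)\,a\,u^{-1}(1_2)$ on the left and $u(1_1)\,a\,u^{-1}(1_2)\,u(x_s)$ on the right. In each expression the summation coming from $\Delta(1)=1_1\otimes 1_2$ affects only the factors $u(1_1)\,a\,u^{-1}(1_2)$ and not $u(x_s)$, so by associativity $u(x_s)$ may be taken outside the sum, yielding $u(x_s)(1\cdot a)$ on the left and $(1\cdot a)u(x_s)$ on the right. The hypothesis $1\cdot a=a$ then reduces the equality to $u(x_s)a=au(x_s)$. Since $a\in A$ and $x_s\in H_s$ were arbitrary, we conclude $u(H_s)\subseteq C(A)$.

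I expect the only genuine obstacle to be the recognition that the sandwich map $\theta_a$ converts the ``central in $A\otimes A$'' hypothesis into an honest commutativity statement; once this is seen, the assumption $1\cdot a=a$ finishes the argument and the remainder is routine bookkeeping with Sweedler notation and associativity.
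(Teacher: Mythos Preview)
Your proof of part (v) is correct and follows essentially the same approach as the paper: both arguments obtain $e(1)=1$ from Remark \ref{idempotente} applied at $h=1$, and both collapse the $C_{A\otimes A}$ hypothesis to $u(x_s)(1\cdot a)=(1\cdot a)\,u(x_s)$ before invoking $1\cdot a=a$. The only cosmetic difference is that the paper first multiplies by $(1\otimes a)$ in $A\otimes A$ and then applies the multiplication map, whereas you bundle these into the single sandwich map $\theta_a(p\otimes q)=paq$; the computations are identical.
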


\begin{proof}
${\rm (i)}$ Let $h\in H$ and $a,b\in A$ and assume that  $\varphi_{f,h}(ab)=\varphi_{f,h_1}(a)\varphi_{f,h_2}(b)$. Then,
	\begin{align*}
	f(h_1)abf(h_2)&=f(h_1)af(h_2)f(h_3)bf(h_4) \\
	& =  f(h_1)af(h_2)bf(h_3).
	\end{align*}
	Hence,
	\begin{align*}
	h\cdot (ab) & =  u(h_1)abu^{-1}(h_2) \\
	& =  u(h_1)f(h_2)abf(h_3)u^{-1}(h_4) \\
	& =  u(h_1)f(h_2)af(h_3)bf(h_4)u^{-1}(h_5) \\
	& = u(h_1)af(h_2)bu^{-1}(h_3)    \\
	& = u(h_1)au^{-1}(h_2)u(h_3)bu^{-1}(h_4)    \\
	& =  (h_1\cdot a)(h_2\cdot b).
	\end{align*}
	
	Conversely,
	\begin{align*}
	& h\cdot (ab)  = (h \cdot a)(h\cdot b) \\
	\Rightarrow & \quad  u(h_1)abu^{-1}(h_2)=u(h_1)au^{-1}(h_2)u(h_3)bu^{-1}(h_4)\\
	\Rightarrow & \quad u^{-1}(h_1)u(h_2)abu^{-1}(h_3)u(h_4)\\
	&\,\,\,\, =\,\,u^{-1}(h_1)u(h_2)au^{-1}(h_3)u(h_4)bu^{-1}(h_5)u(h_6)\\
	\Rightarrow &\quad f(h_1)abf(h_2)=f(h_1)af(h_2)bf(h_3).\\
	\Rightarrow &\quad \varphi_h(ab)=\varphi_{h_1}(a)\varphi_{h_2}(b).
	\end{align*}
	
\noindent${\rm (ii)}$ Note that $g\cdot 1=e(g)$, for all $g\in H$. Thus, $h\cdot 1 = \varepsilon_t(h) \cdot 1$ if and only if
	$e(h) = e(\varepsilon_t(h))=e\circ\varepsilon_t(h)$, for all $h\in H$.
	The second equivalence follows because $\varepsilon_t$ is a projection. In fact, we have that $H = \kernel (\varepsilon_t) \oplus \imagem (\varepsilon_t)$ as a vector space. Therefore, $e(\varepsilon_t(h))= e(\varepsilon_t(h_0+h^{\prime}))=e(\varepsilon_t(h^{\prime}))= e(h^{\prime})$, for all $h\in H$ with $h = h_0+h^{\prime}$, $h_0 \in \kernel (\varepsilon_t)$ and $h^{\prime}\in \imagem (\varepsilon_t)$. If $\kernel (\varepsilon_t) \subseteq \kernel (e)$, then $e(h) = e(h_0) + e(h^{\prime}) = e(h^{\prime})$, and it follows that $e \circ \varepsilon_t = e$. The converse is immediate.
	
\noindent ${\rm (iii)}$ Suppose that $g\cdot (h \cdot a) = (gh)\cdot a$, for all $g,h\in H$ and $a\in A$. By Remark \ref{idempotente} it is enough to prove that $t(H\times H)\subset C(A)$. Let $g, h \in H$ and $a\in A$. Then,
\begin{align*}
& g\cdot (h \cdot a)  = gh \cdot a \\
\Rightarrow & \quad  u(g_1)u(h_1)au^{-1}(h_2)u^{-1}(g_2)\, =\, u(g_1h_1)au^{-1}(g_2h_2)\\
\Rightarrow & \quad u^{-1}(h_1)u^{-1}(g_1)u(g_2)u(h_2)au^{-1}(h_3)u^{-1}(g_3)u(g_4h_4) \\
&\,\,\,\, =\,\,u^{-1}(h_1) u^{-1}(g_1)u(g_2h_2)au^{-1}(g_3h_3)u(g_4h_4)\\
\Rightarrow &\quad u^{-1}(h_1)f(g_1)u(h_2)au^{-1}(h_3)u^{-1}(g_2)u(g_3h_4)\\
&\,\,\,\, =\,\, u^{-1}(h_1)u^{-1}(g_1)u(g_2h_2)af(g_3h_3) \\
\Rightarrow &\quad u^{-1}(h_1)u(h_2)au^{-1}(h_3)f(g_1)u^{-1}(g_2)u(g_3h_4)\\
&\,\,\,\, =\,\, u^{-1}(h_1)u^{-1}(g_1)u(g_2h_2)f(g_3h_3)a\\
\Rightarrow &\quad f(h_1)au^{-1}(h_2)u^{-1}(g_1)u(g_2h_3)\\
&\,\,\,\, =\,\,u^{-1}(h_1)u^{-1}(g_1)u(g_2h_2)a \\
\Rightarrow &\quad au^{-1}(h_1)u^{-1}(g_1)u(g_2h_2)\, =\,u^{-1}(h_1)u^{-1}(g_1)u(g_2h_2)a \\
\Rightarrow &\quad at(g,h)=t(g,h)a.
\end{align*}
\smallbreak
Conversely, suppose that $g\cdot e(h)=e(gh)$ and $at(g,h)=t(g,h)a$, for all $g,h\in H$ and $a\in A$. Then,
\begin{align*}
& at(g,h)=t(g,h)a \\
\Rightarrow &\quad au^{-1}(h_1)u^{-1}(g_1)u(g_2h_2)\,= \,u^{-1}(h_1)u^{-1}(g_1)u(g_2h_2)a \\
\Rightarrow &\quad u(g_1)u(h_1)au^{-1}(h_2)u^{-1}(g_2)u(g_3h_3)u^{-1}(g_4h_4)\\
&\,\,\,\, =\,\,u(g_1)u(h_1)u^{-1}(h_2)u^{-1}(g_2)u(g_3h_3)au^{-1}(g_4h_4) \\
\Rightarrow &\quad u(g_1)u(h_1)au^{-1}(h_2)u^{-1}(g_2)(g_3\cdot e(h_3))\\
&\,\,\,\, =\,\, (g_1\cdot e(h_1))u(g_2h_2)au^{-1}(g_3h_3) \\
\Rightarrow &\quad u(g_1)u(h_1)au^{-1}(h_2)u^{-1}(g_2)u(g_3)e(h_3)u^{-1}(g_4)\\
&\,\,\,\, =\,\, e(g_1h_1) u(g_2h_2)au^{-1}(g_3h_3) \\
\Rightarrow &\quad u(g_1)u(h_1)au^{-1}(h_2)f(g_2)e(h_3)u^{-1}(g_3)\\
&\,\,\,\, =\,\, u(g_1h_1)au^{-1}(g_2h_2) \\
\Rightarrow &\quad u(g_1)f(g_2)u(h_1)au^{-1}(h_2)e(h_3)u^{-1}(g_3)\\
&\,\,\,\, =\,\, u(g_1h_1)au^{-1}(g_2h_2) \\
\Rightarrow &\quad u(g_1)u(h_1)au^{-1}(h_2)u^{-1}(g_2)\, =\, u(g_1h_1)au^{-1}(g_2h_2) \\
\Rightarrow & \quad g\cdot(h\cdot a)=gh\cdot a.
\end{align*}	

\noindent ${\rm (iv)}$ Suppose that $u(H_s) \subseteq C(A)$, $e(1)=1$ and let $a\in A$. Then, $$1 \cdot a = u(1_1)au^{-1}(1_2) = au(1_1)u^{-1}(1_2) = a.$$

\noindent ${\rm (v)}$ it follows by Remark \ref{idempotente} that $e(1)=1$.  Given $x\in u(H_s)$ and $a\in A$ we have
\begin{align*}
xu(1_1)\otimes av(1_2)&=(1\otimes a)(xu(1_1)\otimes v(1_2))\\
                        &=(1\otimes a)(x\otimes 1)\lambda_{u,v}(1)\\
                        &=(1\otimes a)\lambda_{u,v}(1)(1\otimes x) \\
                        &=(1\otimes a)(u(1_1)\otimes v(1_2)x)\\
                        &=u(1_1)\otimes av(1_2)x.\
\end{align*}
Thus, $xa=xu(1_1)av(1_2)=u(1_1)av(1_2)x=ax$.
\end{proof}

We saw in Example \ref{example_invertibility} that $\varepsilon_t, \varepsilon_s$ are idempotents in
	$\Hom(H,H)$ and $u(h) = h$ is $(\varepsilon_t,\varepsilon_s)$-invertible with $(\varepsilon_t,\varepsilon_s)$-inverse $u^{-1}(h)=S(h)$, where
	$S$ is the antipode map. In this case, if we apply the relation \eqref{formula} then we obtain
	\begin{align}\label{case:adjoint}
	&\,\,\,\, h\cdot g=h_1gS(h_2),\quad\,\, h,g\in H.&
	\end{align}
Since $\Delta$ is multiplicative and $S$ is an anti-algebra morphism, it is immediate to check that $h\cdot(g\cdot l)=(hg)\cdot l$, for all $g,h,l\in H$. By Proposition 2.11 of \cite{BNS}, we have that $\lambda_{1,S}(1)=1_1\otimes S(1_2)$ is a separability idempotent element for the algebra  $H_s$. Hence $\lambda_{1,S}(1)\in C_{H\otimes H}(H_s)$. Moreover, $e(1)=\varepsilon_s(1)=1$. It follows, by Theorem \ref{analogousLemma1.4forWHA} (iv) and (v), the following result.

\begin{corollary}\label{cor:first} $H$ is an $H$-module via \eqref{case:adjoint} if and only if $H_s\subset C(H)$.
	\qed
\end{corollary}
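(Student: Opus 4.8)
The plan is to reduce the statement to a single module axiom and then read off both implications from Theorem \ref{analogousLemma1.4forWHA}(iv) and (v). First I would observe that, for the adjoint action \eqref{case:adjoint}, the associativity $h\cdot(g\cdot l)=(hg)\cdot l$ already holds unconditionally, as noted just above using that $\Delta$ is multiplicative and $S$ is an anti-algebra map. Since $\F$-bilinearity is automatic, the only remaining module axiom is unitality, so ``$H$ is an $H$-module via \eqref{case:adjoint}'' is equivalent to ``$1\cdot a=a$ for all $a\in H$.'' The corollary therefore reduces to showing that $1\cdot a=a$ for all $a\in H$ if and only if $H_s\subset C(H)$, and I would apply the theorem with the specialization of Example \ref{example_invertibility}, namely $A=H$, $u=\id$, $v=u^{-1}=S$, $e=\varepsilon_t$ and $f=\varepsilon_s$.

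For the implication $H_s\subset C(H)\Rightarrow$ module, I would invoke part (iv). Because $u=\id$ we have $u(H_s)=H_s$, so the hypothesis $u(H_s)\subseteq C(A)$ is exactly $H_s\subseteq C(H)$; together with $e(1)=1$ (established above), part (iv) yields $1\cdot a=a$, and combined with the automatic associativity this gives the $H$-module structure. For the converse I would invoke part (v). Its hypothesis asks that $\lambda_{u,v}(1)\in C_{A\otimes A}(u(H_s))$, and here $\lambda_{u,v}(1)=\lambda_{1,S}(1)=1_1\otimes S(1_2)$; this is precisely the separability idempotent of $H_s$ by Proposition 2.11 of \cite{BNS}, hence it lies in $C_{H\otimes H}(H_s)=C_{A\otimes A}(u(H_s))$. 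So under the assumption $1\cdot a=a$ for all $a$, part (v) returns $u(H_s)\subset C(A)$, that is, $H_s\subset C(H)$.

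I do not expect a genuine computational obstacle here, since the two nontrivial inputs are already packaged: the unconditional associativity of the adjoint action and Theorem \ref{analogousLemma1.4forWHA}(iv)--(v). The main point requiring care is the verification of the hypotheses rather than any fresh calculation, specifically identifying correctly that in Example \ref{example_invertibility} the relevant idempotents are $e=\varepsilon_t$ and $f=\varepsilon_s$ (so that $e(1)=1$), and having the separability idempotent property of $1_1\otimes S(1_2)$ available, which is exactly where the weak-Hopf structure of $H_s$ and its finite-dimensionality enter. Once these are in place, both directions are immediate consequences of parts (iv) and (v).
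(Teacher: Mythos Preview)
Your proposal is correct and follows essentially the same route as the paper: the paragraph preceding the corollary already records the unconditional associativity of the adjoint action, the facts $e(1)=1$ and $\lambda_{1,S}(1)\in C_{H\otimes H}(H_s)$ (via the separability idempotent from \cite{BNS}), and then invokes Theorem~\ref{analogousLemma1.4forWHA}(iv)--(v) exactly as you do. Your identification of the specialization $A=H$, $u=\id$, $v=S$, $e=\varepsilon_t$, $f=\varepsilon_s$ is precisely the one intended.
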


Another consequence of Theorem \ref{analogousLemma1.4forWHA} is the following.

\begin{corollary}\label{cor:second}  If $f(H)\subset C(A)$ then
$h\cdot (ab)=(h_1\cdot a)(h_2\cdot b)$, for all $h\in H$ and $a,b\in A$.
\end{corollary}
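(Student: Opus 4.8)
The plan is to deduce this directly from part (i) of Theorem \ref{analogousLemma1.4forWHA}, which asserts that $h\cdot(ab)=(h_1\cdot a)(h_2\cdot b)$ holds for all $h\in H$ and $a,b\in A$ precisely when $\varphi_{f,h}(ab)=\varphi_{f,h_1}(a)\varphi_{f,h_2}(b)$. Hence it suffices to verify this last identity under the hypothesis $f(H)\subseteq C(A)$ and then invoke the ``if'' direction of that equivalence. No independent argument about the module-algebra axioms is needed; everything is routed through the characterization already proved.

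The key observation I would record first is that centrality of $f(H)$ collapses $\varphi_{f,h}$ to a single multiplication. Since $f$ is an idempotent of $\Hom(H,A)$ (indeed $f=u^{-1}\ast u$), we have $f\ast f=f$, that is, $f(h_1)f(h_2)=f(h)$ for all $h\in H$. Using that both $f(h_1)$ and $f(h_2)$ lie in $C(A)$, this gives
\[
\varphi_{f,h}(a)=f(h_1)\,a\,f(h_2)=a\,f(h_1)f(h_2)=a\,f(h),\qquad a\in A.
\]
With this simplification the required multiplicativity is a one-line check: expanding, moving $f(h_1)$ past $b$ by centrality, and then using idempotency,
\[
\varphi_{f,h_1}(a)\,\varphi_{f,h_2}(b)=a f(h_1)\,b f(h_2)=ab\,f(h_1)f(h_2)=ab\,f(h)=\varphi_{f,h}(ab).
\]
By Theorem \ref{analogousLemma1.4forWHA}(i) this yields $h\cdot(ab)=(h_1\cdot a)(h_2\cdot b)$.

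There is essentially no genuine obstacle here; the whole content is the reduction to Theorem \ref{analogousLemma1.4forWHA}(i) together with the remark that $f(H)\subseteq C(A)$ turns $\varphi_{f,h}$ into right multiplication by $f(h)$. The only points demanding a little care are the correct handling of the Sweedler indices and the fact that the relevant idempotency is idempotency for the \emph{convolution} product, i.e. $f(h_1)f(h_2)=f(h)$ rather than a pointwise statement. Once the identity $\varphi_{f,h}(a)=af(h)$ is in hand, the multiplicativity of $\varphi_f$ is immediate and the cited equivalence finishes the proof.
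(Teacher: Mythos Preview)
Your proof is correct and follows essentially the same approach as the paper: both verify the identity $\varphi_{f,h_1}(a)\varphi_{f,h_2}(b)=\varphi_{f,h}(ab)$ using centrality of $f(H)$ together with the convolution idempotency $f(h_1)f(h_2)=f(h)$, and then invoke Theorem~\ref{analogousLemma1.4forWHA}(i). The only cosmetic difference is that you first record the simplification $\varphi_{f,h}(a)=af(h)$, whereas the paper manipulates the expression $f(h_1)af(h_2)bf(h_3)$ directly.
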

\begin{proof} Let $h\in H$ and $a,b\in A$. Then
	\begin{align*}
	\varphi_{f,h_1}(a)\varphi_{f,h_2}(b)&=f(h_1)af(h_2)bf(h_3)=f(h_1)f(h_2)abf(h_3)\\
	&=f(h_1)abf(h_2)=\varphi_{f,h}(ab).
	\end{align*}
Hence, the result follows by Theorem \ref{analogousLemma1.4forWHA} (i).
\end{proof}

We end this subsection with another equivalent form of the definition of inner action of weak Hopf algebras, as in Lemma 1.14 of \cite{BCM}.

\begin{proposition} \label{secondforminneraction}
	Let $A$ be a left $H$-module algebra. Let $u \in \Hom(H, A)$ an $(e,f)$-invertible element such that $e(h)=h\cdot 1$, and $u(h_1)af(h_2)=u(h)a$, for all $a\in A$ and $h\in H$. Then, the action of $H$ on $A$ is an inner action implemented by $u$ if and only if $(h_1\cdot a)u(h_2) = u(h)a$, for all $h\in H$, $a\in A$.
\end{proposition}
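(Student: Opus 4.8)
The plan is to prove both implications by direct convolution computations, keeping track of the Sweedler indices via coassociativity and using throughout the defining relations of the $(e,f)$-inverse $u^{-1}$ of $u$, namely $u \ast u^{-1} = e$, $u^{-1} \ast u = f$, $u \ast f = u$ and $f \ast u^{-1} = u^{-1}$, together with the derived identities $e \ast u = u$ and $u^{-1} \ast e = u^{-1}$ noted after Definition \ref{definvert}. The two standing hypotheses of the statement, namely $e(h) = h\cdot 1$ and $u(h_1)af(h_2) = u(h)a$, will each be used once.

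For the forward direction I would assume the action is inner, so that $h\cdot a = u(h_1)au^{-1}(h_2)$ as in \eqref{formula}, and simply expand $(h_1\cdot a)u(h_2)$. Substituting the inner formula and spreading out the coproduct gives $(h_1\cdot a)u(h_2) = u(h_1)au^{-1}(h_2)u(h_3)$, and collapsing $u^{-1}(h_2)u(h_3) = (u^{-1}\ast u)(h_2) = f(h_2)$ yields $u(h_1)af(h_2)$. The hypothesis $u(h_1)af(h_2) = u(h)a$ then finishes this direction.

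For the converse, the key idea is to right-convolve the given identity with $u^{-1}$ in order to undo the trailing $u$. Fixing $a$ and regarding both sides of $(h_1\cdot a)u(h_2) = u(h)a$ as elements of $\Hom(H,A)$ in the variable $h$, I would apply $\Delta$ once more and multiply on the right by $u^{-1}$. On the right-hand side this produces $u(h_1)au^{-1}(h_2)$, exactly the candidate inner action. On the left-hand side, coassociativity turns $(h_1\cdot a)u(h_2)u^{-1}(h_3)$ into $(h_1\cdot a)e(h_2)$ via $u\ast u^{-1} = e$; the hypothesis $e(h)=h\cdot 1$ then rewrites this as $(h_1\cdot a)(h_2\cdot 1)$, which by the left $H$-module algebra axiom applied to $a = a\,1_A$ equals $h\cdot a$. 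Equating the two sides gives $h\cdot a = u(h_1)au^{-1}(h_2)$, so the action is inner.

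The computation is routine once the bookkeeping is set up; the only genuinely substantive step is recognizing that right-convolving with $u^{-1}$ (rather than trying to manipulate the equation in place) is what exposes $h\cdot a$ through the module-algebra identity $(h_1\cdot a)(h_2\cdot 1) = h\cdot a$, with the collapses $u^{-1}\ast u = f$ and $u\ast u^{-1} = e$ applied to the correct pair of Sweedler factors. Thus the main thing to be careful about is keeping the indices straight when passing from $\Delta$ to $\Delta^2$ in each expansion; no deeper obstacle is expected.
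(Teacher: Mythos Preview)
Your proposal is correct and matches the paper's proof essentially step for step: the forward direction is identical, and for the converse the paper writes the same chain $h\cdot a = (h_1\cdot a)(h_2\cdot 1) = (h_1\cdot a)e(h_2) = (h_1\cdot a)u(h_2)u^{-1}(h_3) = u(h_1)au^{-1}(h_2)$, which is exactly your ``right-convolve with $u^{-1}$'' argument read from the other end.
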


\begin{proof}
	Suppose that $h\cdot a = u(h_1)au^{-1}(h_2)$, for all $a\in A$, $h\in H$. Then,
	\begin{align*}
	(h_1\cdot a)u(h_2)  &=  (u(h_1)au^{-1}(h_2))u(h_3) = u(h_1)af(h_2) =  u(h)a.\
	\end{align*}
	Conversely, for all $a\in A$, $h\in H$, we have that
	\begin{align*}
	h\cdot a &=  (h_1 \cdot a)(h_2\cdot 1) = (h_1\cdot a)e(h_2) \\
	&=  (h_1\cdot a)u(h_2)u^{-1}(h_3) = u(h_1)a u^{-1}(h_2).\
	\end{align*}
\end{proof}

\section{Adjoint actions and quantum commutativity}
The notion of quantum commutative weak Hopf algebra was given in \cite{AVRC} for any weak Hopf algebra in a strict symmetric monoidal category. Denote by {\bf vec}$_{\F}$ the category of finite dimensional $\F$-vector spaces with the usual symmetry. Then, a weak Hopf algebra in {\bf vec}$_{\F}$ means an ordinary weak Hopf algebra. Moreover, in this case, a weak Hopf algebra $H$ is {\it quantum commutative} if
\begin{align}\label{def:quantum_commutative}
h_1g\varepsilon_s(h_2)=hg,\quad \quad\text{ for all } h,g\in H.
\end{align}

Now we give a characterization of the notion of quantum commutative weak Hopf algebra in {\bf vec}$_{\F}$.

\begin{proposition} \label{quantum_commutative}
	Let $H$ be a weak Hopf algebra. Then $H$ is quantum commutative if and only if $H_s\subseteq C(H)$.
\end{proposition}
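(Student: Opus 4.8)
The plan is to prove the two implications separately. Throughout I would freely use the basic counital identities of weak Hopf algebras. In particular I will use that $\varepsilon_s(h)=S(h_1)h_2$, which is just \eqref{1.6} rewritten, together with the standard identity $h_1\varepsilon_s(h_2)=h$; the latter is easily obtained from \eqref{1.5} and \eqref{1.6} since $h_1\varepsilon_s(h_2)=h_1S(h_2)h_3=\varepsilon_t(h_1)h_2=h$. I will also use the coproduct and idempotent formulas for elements of $H_s$ collected in Proposition \ref{used}.

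For the direction $H_s\subseteq C(H)\Rightarrow$ quantum commutative, I would argue directly from the defining equation \eqref{def:quantum_commutative}. Since $\varepsilon_s(h_2)\in H_s$ for every $h$, the hypothesis that $H_s$ is central allows me to move $\varepsilon_s(h_2)$ past $g$, so that $h_1g\varepsilon_s(h_2)=h_1\varepsilon_s(h_2)g$. Applying $h_1\varepsilon_s(h_2)=h$ then yields $h_1g\varepsilon_s(h_2)=hg$, which is precisely \eqref{def:quantum_commutative}. This implication is routine.

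The converse is the heart of the matter, and the plan is to extract centrality from quantum commutativity through a single well-chosen substitution rather than a long Sweedler computation. Fixing $x\in H_s$ and $g\in H$, I would apply \eqref{def:quantum_commutative} with $h=x$; using $\Delta(x)=1_1\otimes 1_2x$ from Proposition \ref{used}(ii), together with $x=\varepsilon_s(x)$ and the multiplicative property $\varepsilon_s(1_2\varepsilon_s(x))=\varepsilon_s(1_2)\varepsilon_s(x)$ from Proposition \ref{used}(iii), this gives the identity $xg=\sum 1_1g\,\varepsilon_s(1_2)\,x$. The decisive observation is that the same identity applies to $x=1$, which lies in $H_s$ because $\varepsilon_s(1)=1$; in that case it collapses to $g=\sum 1_1g\,\varepsilon_s(1_2)$. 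Substituting this back into the general identity immediately produces $xg=\bigl(\sum 1_1g\,\varepsilon_s(1_2)\bigr)x=gx$, so every $x\in H_s$ is central.

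The step I expect to require the most care is recognizing this last manoeuvre: a single application of quantum commutativity to $h=x$ does not by itself give $xg=gx$, but leaves the auxiliary factor $\sum 1_1g\,\varepsilon_s(1_2)$ sitting to the left of $x$, and the resolution is to evaluate that factor by specializing the very same identity at $x=1$. Identifying $\sum 1_1g\,\varepsilon_s(1_2)$ with $g$ is exactly what makes the argument work, and once the reduction $\varepsilon_s(1_2x)=\varepsilon_s(1_2)x$ is checked carefully (via $x=\varepsilon_s(x)$ and Proposition \ref{used}(iii)), the remainder is purely formal.
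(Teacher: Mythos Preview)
Your argument is correct. The forward direction is identical to the paper's. For the converse, the paper takes a slightly different route: it invokes that $q=1_1\otimes S(1_2)$ is a separability idempotent for $H_s$ (Proposition~2.11 of \cite{BNS}), rewrites $q$ as $1_1\otimes \varepsilon_s(1_2)$ via $S\circ\varepsilon_t=\varepsilon_s\circ\varepsilon_t$, and then uses the separability property $(x\otimes 1)q=q(1\otimes x)$ for $x\in H_s$ to obtain $x1_1\otimes\varepsilon_s(1_2)=1_1\otimes\varepsilon_s(1_2)x$; sandwiching an arbitrary $h$ between the two tensor legs and using $1_1h\varepsilon_s(1_2)=h$ (quantum commutativity at $h=1$) then gives $xh=hx$.

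Your approach replaces the external appeal to the separability idempotent by a second application of quantum commutativity, this time at $h=x\in H_s$: the coproduct formula $\Delta(x)=1_1\otimes 1_2x$ and the identity $\varepsilon_s(1_2x)=\varepsilon_s(1_2)x$ (from Proposition~\ref{used}(iii)) reduce $xg=x_1g\varepsilon_s(x_2)$ to $xg=1_1g\,\varepsilon_s(1_2)\,x$, and then the $h=1$ instance collapses the bracketed factor to $g$. This is genuinely more elementary in that it stays entirely within the counital identities already recorded in Section~1 and avoids the separability-idempotent machinery; the paper's route, on the other hand, makes the conceptual reason (the $H_s$-centrality of $q$) more visible and connects to structure that is reused elsewhere in the paper (e.g.\ after Corollary~\ref{cor:first}).
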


\begin{proof}
Assume that $H_s\subseteq C(H)$ and take $g,h\in H$. Then
\[g_1h\varepsilon_s(g_2)=g_1\varepsilon_s(g_2)h=gh.\]
\noindent Conversely, if $H$ is quantum commutative then $1_1h\varepsilon_s(1_2)=h$, for all $h\in H$. By Proposition 2.11 of \cite{BNS}, $q=1_1\otimes S(1_2)$ is a separability idempotent element for the algebra  $H_s$. Using Lemma 2.9 of \cite{BNS}, we have that \[q=1_1\otimes S(1_2)=1_1\otimes S(\varepsilon_t(1_2))=1_1\otimes \varepsilon_s(\varepsilon_t(1_2))=1_1\otimes \varepsilon_s(1_2).\] Hence, $x1_1\otimes \varepsilon_s(1_2)=1_1\otimes \varepsilon_s(1_2)x$,  for all $x\in H_s$. Multiplying in the left side by $h\otimes 1$, $h\in H$, and applying the multiplication map, it follows that
\[xh=x1_1h\varepsilon_s(1_2)=1_1h\varepsilon_s(1_2)x=hx.\]
\end{proof}

The next result is the Corollary 2.16 of \cite{AVRC}, which can be obtained using developed results in this paper.

\begin{corollary}\label{cor:espanhois}
	$H$ is an $H$-module algebra via \eqref{case:adjoint} if and only if $H$ is quantum commutative.
\end{corollary}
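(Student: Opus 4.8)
The plan is to funnel both sides of the biconditional through the single centrality condition $H_s \subset C(H)$, exploiting that all the necessary ingredients have already been assembled. First I would record, from Example \ref{example_invertibility} and the discussion preceding Corollary \ref{cor:first}, that the adjoint action \eqref{case:adjoint} is precisely the action of the form \eqref{formula} implemented by $u = \id$, whose $(\varepsilon_t, \varepsilon_s)$-inverse is the antipode $S$. In particular here $e = \varepsilon_t$, $f = \varepsilon_s$, and $e(h) = h \cdot 1 = \varepsilon_t(h)$. By Proposition \ref{quantum_commutative}, quantum commutativity of $H$ is equivalent to $H_s \subset C(H)$, so it suffices to prove that $H$ is an $H$-module algebra via \eqref{case:adjoint} if and only if $H_s \subset C(H)$.

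For the forward implication, if $H$ is an $H$-module algebra via the adjoint action then it is in particular an $H$-module, and Corollary \ref{cor:first} immediately yields $H_s \subset C(H)$. For the converse, I would assume $H_s \subset C(H)$ and verify the three module-algebra axioms in turn. The associativity $g \cdot (h \cdot a) = (gh) \cdot a$ together with the unit action $1 \cdot a = a$ exactly say that $H$ is an $H$-module, which holds by Corollary \ref{cor:first}. The compatibility $h \cdot (gl) = (h_1 \cdot g)(h_2 \cdot l)$ follows from Corollary \ref{cor:second}, since $f(H) = \varepsilon_s(H) = H_s \subset C(H) = C(A)$. Finally the unit compatibility $h \cdot 1 = \varepsilon_t(h) \cdot 1$ follows from Theorem \ref{analogousLemma1.4forWHA}(ii): as $e = \varepsilon_t$ is idempotent we have $e \circ \varepsilon_t = e$. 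Assembling these three facts shows that $H$ is an $H$-module algebra.

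The step requiring the most care is the converse, where the subtlety is that Corollary \ref{cor:first} delivers only the weaker \emph{$H$-module} property, not the full \emph{$H$-module algebra} structure. The crux is to recognize that under $H_s \subset C(H)$ the centrality hypothesis $f(H) \subset C(A)$ of Corollary \ref{cor:second} is automatically met---because $f = \varepsilon_s$ has image $H_s$---so the multiplicativity axiom comes for free, while the remaining unit axiom reduces to the idempotency of $\varepsilon_t$. I do not expect any genuinely hard computation here: all the analytic work was already carried out in Theorem \ref{analogousLemma1.4forWHA} and its two corollaries, and this final statement is essentially a bookkeeping of which earlier result supplies each module-algebra axiom, pivoting on the bridge $H_s \subset C(H)$ provided by Proposition \ref{quantum_commutative}.
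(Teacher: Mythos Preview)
Your proposal is correct and follows essentially the same route as the paper's own proof: both directions are channeled through the centrality condition $H_s\subset C(H)$ via Proposition~\ref{quantum_commutative}, with Corollary~\ref{cor:first} supplying the $H$-module structure, Corollary~\ref{cor:second} the multiplicativity axiom (using $f(H)=\varepsilon_s(H)=H_s\subset C(H)$), and Theorem~\ref{analogousLemma1.4forWHA}(ii) the unit axiom via the idempotency of $\varepsilon_t$. The only cosmetic difference is that you first reduce the biconditional to $H_s\subset C(H)$ and then argue, whereas the paper invokes Proposition~\ref{quantum_commutative} at each end; the content is identical.
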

\begin{proof} Suppose that $H$ is an $H$-module algebra. By Corollary \ref{cor:first} and Proposition \ref{quantum_commutative}, we have that $H$ is quantum commutative. Conversely, assume that $H$ is quantum commutative. Then, by Proposition \ref{quantum_commutative}, $H_s\subset C(H)$. Consequently, it follows from  Corollary \ref{cor:first} that $H$ is an $H$-module. Since $\varepsilon_t$ is an idempotent map, we obtain from Theorem \ref{analogousLemma1.4forWHA} (ii) that $h\cdot 1=\varepsilon_t(h)\cdot 1$, for all $h\in H$. By Corollary \ref{cor:second}, $h\cdot (ab)=(h_1\cdot a)(h_2\cdot b)$, for all $h\in H$ and $a,b\in A$. Therefore, $H$ is an $H$-module algebra.
\end{proof}

\begin{remark}\label{rem:2.2}
	Let $H$ be a weak Hopf algebra, $A$ a left $H$-module algebra, $h,g\in H$ and $x\in A$.
 It is clear that the
	linear maps $A\longrightarrow A\#H,$ $a\mapsto a\#1$  and $H\longrightarrow A\#H,$ $h\mapsto h\#1$, are
	injective morphism of algebras. Furthermore,
	\begin{align*}
	h \cdot (x\#1_H) & = h_1 \cdot x \# h_2 S(h_3)=(h_1 \cdot x)\cdot \vet(h_2) \#1_H\\
	&\stackrel{\eqref{actiondimitri}}=(h_1 \cdot x)(\vet(h_2)\cdot 1_A)\#1_H\\
	&=(h_1 \cdot x)(h_2\cdot 1_A)\#1_H\\
	&=(h\cdot x)\#1_H.
	\end{align*}
\end{remark}	

Given $H$ a weak Hopf algebra and $A$ a left $H$-module algebra, we define:
\begin{align*}
& h\cdot (a\#g) = (h_1\cdot a)\# h_2gS(h_3), \text{ for all } h,g\in H,\,\,a\in A.&
\end{align*}
Consider $e, f, u, v \in \Hom(H,A\#H)$ given by:
	\begin{align*}
	&e(h) = (h\cdot 1) \# 1,& &f(h) = 1 \# \varepsilon_s(h),& &u(h) = 1\# h,&  &v(h) = 1 \# S(h).&
	\end{align*}
It is easy to check that $v$ is the $(e,f)$-inverse of $u$. Moreover, for all $h,g\in H$ and $a\in A$, we have:
\begin{align}\label{action_smash}
&h\cdot (a\#g)= h_1 \cdot a \# h_2 g S(h_3)=u(h_1) (a \# g) v(h_2).&
\end{align}

In the next result we present necessary and sufficient conditions in order that the equation \eqref{action_smash} provides an $H$-module structure on $A\#H$.

\begin{proposition}\label{some_equivalences} Let $H$ be a weak Hopf algebra and $A$ a left $H$-module algebra. The following conditions are equivalent:
\begin{enumerate}\renewcommand{\theenumi}{\roman{enumi}}   \renewcommand{\labelenumi}{(\theenumi)}
	\item $A\#H$ is a left $H$-module algebra via the inner action given in \eqref{action_smash}.
	\item $1\#1_1gS(1_2)=1\#g$, for all $g\in H$;
	\item $1\#h_1g\varepsilon_s(h_2)=1\#hg$, for all $h,g\in H$;
	\item $1\#H_s\subset C(A\#H)$;
	\item $H$ is quantum commutative.
\end{enumerate}
\end{proposition}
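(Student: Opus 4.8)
The plan is to run the cycle (i)$\Rightarrow$(ii)$\Rightarrow$(v)$\Rightarrow$(iv)$\Rightarrow$(i) and to treat the equivalence (iii)$\Leftrightarrow$(v) on its own, as it is the most transparent. Indeed, by Remark \ref{rem:2.2} the map $H\to A\#H$, $h\mapsto 1\#h$, is an injective morphism of algebras, so $1\#h_1g\varepsilon_s(h_2)=1\#hg$ for all $h,g$ is equivalent to $h_1g\varepsilon_s(h_2)=hg$ in $H$, which is precisely the definition \eqref{def:quantum_commutative} of quantum commutativity.

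For (v)$\Leftrightarrow$(ii) I would essentially repeat the computation of Proposition \ref{quantum_commutative}. By Proposition 2.11 of \cite{BNS} the element $q=1_1\otimes S(1_2)$ is a separability idempotent for $H_s$, and $q=1_1\otimes\varepsilon_s(1_2)$; inserting $g$ in the tensor gap gives $1_1gS(1_2)=1_1g\varepsilon_s(1_2)$, so (ii) is the same as $1_1g\varepsilon_s(1_2)=g$ for all $g$. If $H$ is quantum commutative this is the case $h=1$ of \eqref{def:quantum_commutative}, yielding (ii). Conversely, starting from (ii), the separability relation $x1_1\otimes\varepsilon_s(1_2)=1_1\otimes\varepsilon_s(1_2)x$ for $x\in H_s$, after inserting an arbitrary $h$ and multiplying, gives $xh=x(1_1h\varepsilon_s(1_2))=(1_1h\varepsilon_s(1_2))x=hx$, so $H_s\subseteq C(H)$ and (v) follows from Proposition \ref{quantum_commutative}.

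For (v)$\Leftrightarrow$(iv) one direction is immediate: if $1\#H_s\subseteq C(A\#H)$ then each $1\#x$ commutes with all $1\#g$, hence $x\in C(H)$ by injectivity, i.e. $H_s\subseteq C(H)$. For the converse I would first observe the purely smash-theoretic fact that $1\#x$ commutes with every $a\#1$ regardless of quantum commutativity: using $\Delta(x)=1_1\otimes 1_2x$ for $x\in H_s$, the balancing $b\#zh=b(z\cdot 1_A)\#h$ for $z\in H_t$, and the identity $(1_1\cdot a)(1_2\cdot 1_A)=a$ read off from \eqref{impor}, both $(1\#x)(a\#1)$ and $(a\#1)(1\#x)$ reduce to $a\#x$. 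Granting $H_s\subseteq C(H)$ from (v), the element $1\#x$ also commutes with $1\#H$, so $1\#H_s\subseteq C(A\#H)$.

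It remains to connect (i). For (iv)$\Rightarrow$(i) I would check the module-algebra axioms via Theorem \ref{analogousLemma1.4forWHA} applied to the algebra $A\#H$ with the data $e,f,u,v$ of \eqref{action_smash}. Multiplicativity is Corollary \ref{cor:second}, because $f(H)=1\#H_s\subseteq C(A\#H)$ is exactly (iv); the unit condition holds since $e(h)=(h\cdot 1_A)\#1=(\varepsilon_t(h)\cdot 1_A)\#1=e(\varepsilon_t(h))$; and $1\cdot\xi=\xi$ follows from part (iv) of the theorem, as $u(H_s)=1\#H_s\subseteq C(A\#H)$ and $e(1)=1_{A\#H}$. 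Associativity is part (iii) of the theorem: the hypothesis $f(H)\subseteq C(A\#H)$ is in force, $g\cdot e(h)=e(gh)$ holds because $e(h)=(h\cdot 1_A)\#1$ has the form $x\#1$, so Remark \ref{rem:2.2} and associativity of the action on $A$ give $g\cdot e(h)=((gh)\cdot 1_A)\#1=e(gh)$, and finally $t(g,h)=1\#S(h_1)\varepsilon_s(g)h_2$ lies in $1\#H_s\subseteq C(A\#H)$ once $H_s\subseteq C(H)$ is used to move $\varepsilon_s(g)$ past $S(h_1)$ and collapse $S(h_1)h_2=\varepsilon_s(h)$. The cycle is closed by (i)$\Rightarrow$(ii): the unit action forces $1\cdot(1_A\#g)=1_A\#g$, and computing the left-hand side as $(1_1\cdot 1_A)\#1_2gS(1_3)$ and simplifying through the $H_t$-balancing and the weak-Hopf identity $\varepsilon_t(1_1)1_2=1_H$ (a short consequence of \eqref{1.1} and the antipode axioms) reduces it to $1_A\#1_1gS(1_2)$, giving (ii). The step I expect to be most delicate is precisely this associativity verification: pinning down the explicit form of $t(g,h)$ in $A\#H$ and recognizing that its centrality is exactly what quantum commutativity supplies, while the companion identity $g\cdot e(h)=e(gh)$ must be obtained from Remark \ref{rem:2.2} rather than circularly from the associativity one is still trying to establish. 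More broadly, the recurring obstacle absent from the Hopf-algebra case of Proposition \ref{Lemma1.4BCM} is the bookkeeping imposed by the counital maps $\varepsilon_s,\varepsilon_t$ and by the balancing over $H_t$ in $A\otimes_{H_t}H$, which is where the weak structure genuinely intervenes.
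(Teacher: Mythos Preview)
Your proof is correct, but it diverges from the paper's in two places worth noting.

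First, the implication chain: the paper runs (i)$\Rightarrow$(ii)$\Rightarrow$(iii)$\Rightarrow$(iv)$\Rightarrow$(i), handling (iii)$\Leftrightarrow$(v) via Remark \ref{rem:2.2} just as you do. You instead go (i)$\Rightarrow$(ii)$\Rightarrow$(v)$\Rightarrow$(iv)$\Rightarrow$(i), essentially transplanting the separability-idempotent argument of Proposition \ref{quantum_commutative} to get (ii)$\Rightarrow$(v) directly. This is fine and slightly shorter; the paper's (ii)$\Rightarrow$(iii) step invokes an identity from \cite{BNS} (their 2.30c) that you bypass.

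Second, and more substantively, your (iv)$\Rightarrow$(i) is genuinely different. The paper verifies the module-algebra axioms by an explicit computation in $A\#H$, including a lengthy chain of equalities to check $h\cdot[(x\#g)(y\#l)]=(h_1\cdot(x\#g))(h_2\cdot(y\#l))$. You instead route everything through Theorem \ref{analogousLemma1.4forWHA} and Corollary \ref{cor:second}: multiplicativity comes from $f(H)=1\#H_s\subseteq C(A\#H)$, the unit axiom from $e\circ\varepsilon_t=e$, the identity action from part (iv) of the theorem, and associativity from part (iii) once you compute $t(g,h)=1\#S(h_1)\varepsilon_s(g)h_2=1\#\varepsilon_s(g)\varepsilon_s(h)\in 1\#H_s$ and verify $g\cdot e(h)=e(gh)$ via Remark \ref{rem:2.2}. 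This is cleaner and makes better use of the machinery already built in Section 3; the paper's direct computation is more self-contained but longer. Both are valid, and your reduction of $t(g,h)$ to an element of $1\#H_s$ is exactly the kind of payoff Theorem \ref{analogousLemma1.4forWHA} was designed to deliver.
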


\begin{proof} (i) $\Rightarrow$ (ii) Given $h\in H$, we have
	\begin{align*}
	1\#h&=1\cdot (1\#h)=1_1\cdot 1\#1_2hS(1_3)\\
	    &=\varepsilon_t(1_1)\cdot 1\#1_2hS(1_3)=1_1\cdot \varepsilon_t(1_1)\#1_2hS(1_3)\\
	    &=1\#\varepsilon_t(1_1)1_2hS(1_3)=1\#1_1hS(1_2).\
	\end{align*}

\noindent (ii) $\Rightarrow$ (iii) Let $g,h\in H$. Then,
	\begin{align*}
    1\#hg&=(1\#h)(1\#g)=(1\#h)(1\#1_1gS(1_2))\\
         &=h_1\cdot 1\#h_21_1gS(1_2)=\varepsilon_t(h_1)\cdot 1\#h_21_1gS(1_2)\\
         &=1\#h1_1gS(1_2)=1\#h_1gS(h_2)h_3=1\#h_1g\varepsilon_s(h_2).\
	\end{align*}
We used \cite[2.30c]{BNS} for the penultimate equality in the equation above.
\smallbreak

\noindent (iii) $\Rightarrow$ (iv) By Remark \ref{rem:2.2}, one can see that under our hypothesis $H$ is quantum commutative. It follows by Proposition \ref{quantum_commutative} that $H_s\subset C(H)$.
Thus, $(1\#x)(a\#g)=(a\#xg)=(a\#gx)=(a\#g)(1\#x)$, for all $x\in H_s$, $g\in H$ and $a\in A$.
\smallbreak

\noindent (iv) $\Rightarrow$ (i) Let $g, h, l \in H$ and $x,y \in A$. It is clear that $g \cdot (h
\cdot (x \# l))=(gh)\cdot (x\# l)$. Since $H_s\subset C(H)$,
\begin{align*}
1_H \cdot (x \# h) & =  1_1 \cdot x \# 1_2 h S(1_3) = 1_1 \cdot x \# 1_2 S(1_3) h  \\
& = 1_1 \cdot x \# \varepsilon_t(1_2) h=1_1 \cdot x \# 1_2 h = (1_1 \cdot x)\cdot 1_2 \# h\\
& \overset{\eqref{actiondimitri}}=  (1_1 \cdot x) (1_2 \cdot 1_A) \# h =  x \# h,
\end{align*}
\begin{align*}
h \cdot [(x \# g)(y \# l)] & =  h_1 \cdot (x (g_1 \cdot y)) \# h_2 g_2 l S(h_3) \\
& =  h_1 \cdot (x (g_1 \cdot y)) \# h_2 \varepsilon_s (h_3) g_2 l S(h_4) \\
& =  h_1 \cdot (x (g_1 1_1 \cdot y)) \# h_2 g_2 1_2 \varepsilon_s (h_3) l S(h_4) \\
& =  h_1 \cdot (x (g_1 1_1 \cdot y)) \# h_2 g_2 S(h_3) h_4 1_2 l S(h_5) \\
& =  h_1 \cdot (x (g_1 1_1 \cdot y)) \# h_2 g_2 S(h_3) \varepsilon (h_4 1_2) h_5 l S(h_6) \\
& =  h_1 \cdot (x (g_1 1_1 \varepsilon (h_4 1_2) \cdot y)) \#  h_2 g_2 S(h_3) h_5 l S(h_6) \\
& =  h_1 \cdot (x (g_1 \varepsilon_s (h_4) \cdot y)) \# h_2 g_2 S(h_3) h_5 l S(h_6) \\
& =  (h_1 \cdot x) (h_2 \cdot (g_1 \varepsilon_s (h_5) \cdot y)) \# h_3 g_2 S(h_4) h_6 l S(h_7) \\
& =  (h_1 \cdot x)(h_2 g_1 S(h_5) \cdot (h_6 \cdot y)) \# h_3 g_2 S(h_4) h_7 l S(h_8) \\
& =  (h_1 \cdot x \# h_2 g S(h_3)) ( h_4 \cdot y \# h_5 l S(h_6)) \\
& =  (h_1 \cdot (x \# g))(h_2 \cdot (y \# l)).
\end{align*}
Also, using Remark \ref{rem:2.2} (i), follows that,
\[h \cdot (1_A \# 1_H)  = (h \cdot 1_A) \# 1_H = (\varepsilon_t(h) \cdot 1_A) \# 1_H = \varepsilon_t(h) \cdot (1_A \# 1_H).\]
\noindent (iii) $\Leftrightarrow$ (v)  follows by Remark \ref{rem:2.2}.
\end{proof}

We end this section with the particular case of groupoid algebras. Let $\G$ be a groupoid, that is,
a small category such that each morphism is invertible. Given $g\in \G$, we denote
\[ s(g):=g^{-1}g,\,\,\, t(g):=gg^{-1},\,\,\,\G_0:=\{s(g)\,:\,g\in \G\}.\]
Given $\alpha\in \G_0$,  we consider the isotropy group $\G_\alpha:=\{g\in \G\,:\,s(g)=t(g)=\alpha\}$ associated to $\alpha$. Consider the $\F$-vector space $\F\G$ with
bases $\G$. If $\G$ is finite, then $\F\G$ is a weak Hopf algebra with the following structures:
\begin{align*}
&m(g\otimes h)=\left\{\begin{array}{cc}
gh,&\text{ if } s(g)=t(h) \\
0,& \text{otherwise}
\end{array}\right.,& &\mu(1)=\sum_{\alpha\in \G_o} \alpha,& \\
&\Delta(g)=g\otimes g ,\quad \varepsilon(g)=1\,\,\text{ and }\,\, S(g)=g^{-1},& &\text{ for all } g\in \G.&
\end{align*}

Let $\G$ be a finite groupoid, $A$ a left $\F\G$-module algebra and consider $e(g)=1\#t(g)$ and $f(g)=1\#s(g)$, for all $g\in \G$. Then the element $u\in \Hom(\F\G, A\#\F\G)$ given by $u(h) = 1\# h$ is $(e,f)$-invertible with inverse $u^{-1}(h) = 1\# h^{-1}$, for all $h\in \G$. In this case, we can rewrite the relation \eqref{action_smash} as
\begin{align}\label{weakinneraction}
&h \cdot (x \# g):=u(h_1)(x\#g)u^{-1}(h_2)=h\cdot x\# h g h^{-1},& &g,h\in \G,\,\,x\in A.&
\end{align}

 \begin{corollary} \label{example_groupoid}
	Let $\G$ be a finite groupoid and $A$ a left $\F\G$-module algebra.
	Then,  \eqref{weakinneraction} determines an action of $\F \G$ on
	$A\#\F\G$ if and only if $\G=\coprod_{\alpha\in
		\G_0}\G_\alpha$ (that is, $\G$ is a disjoint union of its isotropy groups).
\end{corollary}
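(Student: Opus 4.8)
The plan is to read the statement as the specialization of Proposition \ref{some_equivalences} to the groupoid algebra $H=\F\G$. As already recorded just before the statement, for $\F\G$ one has $\Delta(h)=h\otimes h$ and $S(h)=h^{-1}$, the element $u(h)=1\#h$ is $(e,f)$-invertible with $u^{-1}(h)=1\#h^{-1}$, and the inner action \eqref{action_smash} rewrites exactly as \eqref{weakinneraction}. Hence by Proposition \ref{some_equivalences} the formula \eqref{weakinneraction} determines an $\F\G$-module algebra structure on $A\#\F\G$ if and only if $\F\G$ is quantum commutative, and by Proposition \ref{quantum_commutative} this is equivalent to $(\F\G)_s\subseteq C(\F\G)$. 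So the entire statement reduces to the purely combinatorial equivalence
$$(\F\G)_s\subseteq C(\F\G)\quad\Longleftrightarrow\quad \G=\coprod_{\alpha\in\G_0}\G_\alpha.$$

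First I would pin down the source counital subalgebra explicitly. Writing $1=\sum_{\alpha\in\G_0}\alpha$, $\Delta(1)=\sum_{\alpha}\alpha\otimes\alpha$ and $\varepsilon(g)=1$, the computation of $\ves(g)=(\id\otimes\varepsilon)((1\otimes g)\Delta(1))$ collapses: $\alpha\beta=0$ unless $\alpha=\beta$, and $g\alpha=0$ unless $\alpha=s(g)$, so only the term $\alpha=s(g)$ survives and $\ves(g)=s(g)$ for every $g\in\G$. Since $\ves$ is $\F$-linear and fixes each $\alpha\in\G_0$, it follows that $(\F\G)_s=\F\G_0$, the subalgebra spanned by the identity morphisms.

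Next I would decide when an idempotent $\alpha\in\G_0$ is central. The partial product gives $\alpha g=g$ when $t(g)=\alpha$ and $\alpha g=0$ otherwise, while $g\alpha=g$ when $s(g)=\alpha$ and $g\alpha=0$ otherwise. Comparing on the basis $\G$, the identity $\alpha g=g\alpha$ for all $g$ is equivalent to: for every $g$, one has $t(g)=\alpha$ exactly when $s(g)=\alpha$. Requiring this for all $\alpha\in\G_0$ forces $s(g)=t(g)$ for every $g$, because for a fixed $g$ the centrality of $\alpha=t(g)$ tested on the basis element $g$ already gives $g\,t(g)=t(g)\,g=g\neq 0$, hence $s(g)=t(g)$; conversely $s(g)=t(g)$ for all $g$ visibly makes every $\alpha$ central. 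Thus $\F\G_0\subseteq C(\F\G)$ if and only if $s(g)=t(g)$ for all $g\in\G$, which is exactly the assertion that each $g$ lies in the isotropy group $\G_{s(g)}$, i.e. $\G=\coprod_{\alpha\in\G_0}\G_\alpha$, the union being disjoint since an element of $\G_\alpha$ has $s=t=\alpha$.

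The arguments are all elementary, and the only point needing genuine care is the bookkeeping of the partial groupoid multiplication—tracking which products vanish because sources and targets fail to match—both in evaluating $\ves$ and in testing the centrality of the objects $\alpha$. Once $(\F\G)_s=\F\G_0$ is identified, the equivalence with the disjointness of the isotropy groups is immediate, and feeding it back through Propositions \ref{quantum_commutative} and \ref{some_equivalences} yields the corollary.
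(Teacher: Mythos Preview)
Your proposal is correct and follows essentially the same route as the paper: reduce via Propositions \ref{some_equivalences} and \ref{quantum_commutative} to the condition $(\F\G)_s\subseteq C(\F\G)$, identify $(\F\G)_s$ with the span of the identity morphisms, and then characterize its centrality as $s(g)=t(g)$ for all $g$. The only difference is that the paper cites \cite[Section 2.5]{NikVai} for the computation of $(\F\G)_s$ and leaves the centrality characterization implicit, whereas you spell both out directly.
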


\begin{proof}
It follows from \cite[Section 2.5]{NikVai} that $(\F\G)_s=(\F\G)_t=\oplus_{\alpha\in \G_0}\F \alpha$, which implies that$(\F\G)_s\subset C(\F\G)$ if and only if $\G=\coprod_{\alpha\in \G_0}\G_\alpha$. Hence, the result follows by Proposition \ref{quantum_commutative} and Proposition \ref{some_equivalences}.
\end{proof}

\subsection*{Acknowledgments}
The authors would like to thank  V. Rodrigues for fruitful discussions and A. Paques for his comments and suggestions.


\begin{thebibliography}{AAAA}

\bibitem{AVRC} J. N. A. \'Alvarez, J. M. F. Vilaboa, R. G. Rodr\'{\i}guez and C. S. Calvo, \emph{Adjoint actions
and quantum commutativity}, preprint.


\bibitem{A} G. Azumaya, \emph{Strongly $\pi$-regular rings}, J. Fac. Sci. Hokkaido Univ. {\bf 13}(1954), 34 - 39.

\bibitem{BCM} R. Blattner, M. Cohen and S. Montgomery, {\slshape Crossed products and inner actions of Hopf algebras}, TAMS vol.
{\bf{298}} n. {\bf {2}} (1986), 671-711.

\bibitem{BNS} G. B{\"o}hm, F. Nill and K. Szlach{\'a}nyi, {\slshape Weak Hopf Algebras}, J. Algebra {\bf {211}}
(1999), 385-438.

\bibitem{CG} S. Caenepeel and E. de Groot, {\slshape Modules over weak entwining structures}, Contemp. Math. {\bf 267} (2000),
31- 54.

\bibitem{Cr} K. Crow, {\slshape Simple regular skew group rings}, J. Algebra Appl. {\bf 4 } n. {\bf 2} (2005), 127- 137.

\bibitem{D} M. P. Drazin, {\slshape Pseudo-inverses in associative rings and semigroups}, Amer. Math. Monthly {\bf 65 (7)} (1958), 506 - 514.

\bibitem{F} D. Fl\^ores, {\em A\c c\~ao de grup\'oides sobre \'algebras: teoremas de estrutura}, PhD Thesis - UFRGS (2011).

\bibitem{HM} R. Harte and M. Mbekhta, {\slshape On generalized inverses in $C^{\ast}$-algebras},
Studia Math. {\bf 103} (1992), 71 - 77.

\bibitem{NikVai} D. Nikshych and L. Vainerman, {\slshape Finite Quantum Groupoids and Their Applications}, New Directions in Hopf Algebras, MSRI publications. vol. {\bf {43}} (2002).

\bibitem{NSW} F. Nill, K. Szlach$\acute{a}$nyi and H.-W. Wiesbrock, {\slshape Weak Hopf algebras and reducible Jones inclusions of depth 2}, I: From crossed products to Jones towers. math.  QA/{\bf{9806130}} (1998).

\bibitem{P} R. Penrose, {\slshape A generalized inverse for matrices}, Proc. Cambridge Philos. Soc. {\bf 51} (1955), 406 - 413.

\bibitem{Sw} M. E. Sweedler, {\slshape Cohomology of algebras over Hopf algebras}, Trans. Amer. Math. Soc. {\bf 133} (1968), 205 - 235.

\bibitem{Ta2} M. Takeuchi, \emph{Free Hopf algebras generated by coalgebras}, J. Math. Soc. Japan \textbf{23} n. {\bf {4}} (1971), 561-582.


\end{thebibliography}
\end{document}